\providecommand*{\input@path}{}
\g@addto@macro\input@path{{}}
  \providecommand*{\toclevel@author}{999}
  \providecommand*{\toclevel@title}{0}
\theoremstyle{plain}
  \newtheorem{theorem}{Theorem}
  \newtheorem{corollary}{Corollary}
\theoremstyle{definition}
\theoremstyle{remark}
\newcommand{\supp}{{\mathrm{supp}}}
\DeclareMathOperator*{\essinf}{ess\,inf}
\numberwithin{equation}{section} % number according to sections	
\begin{document}

\title{Constructing QMC Finite Element Methods \\ for Elliptic PDEs with Random Coefficients 
	    \\ by a Reduced CBC Construction}
\author{Adrian Ebert \and Peter Kritzer\thanks{Supported by the Austrian Science Fund (FWF) Project  F5506-N26, 
part of the Special Research Program ``Quasi-Monte Carlo Methods: Theory and Applications''.} \and Dirk Nuyens}

\maketitle

\abstract{
In the analysis of using quasi-Monte Carlo (QMC) methods to approximate expectations of a linear functional of the solution of an elliptic PDE with random diffusion coefficient the sensitivity w.r.t.\ the parameters is often stated in terms of product-and-order-dependent (POD) weights. The (offline) fast component-by-component (CBC) construction of an $N$-point QMC method making use of these POD weights leads to a cost of $\calO(s N\log(N) + s^2 N)$ with $s$ the parameter truncation dimension. When $s$ is large this cost is prohibitive. As an alternative Herrmann and Schwab \cite{HS18} introduced an analysis resulting in product weights to reduce the construction cost to $\calO(s N \log(N))$. We here show how the reduced CBC method can be used for POD weights to reduce the cost to $\mathcal{O}(\sum_{j=1}^{\min\{s,s^{\ast}\}} (m-w_j+j) \, b^{m-w_j})$, where $N=b^m$ with prime $b$, $w_1 \le \cdots \le w_s$ are nonnegative integers and $s^*$ can be chosen much smaller than $s$ depending on the regularity of the random field expansion as such making it possible to use the POD weights directly. We show a total error estimate for using randomly shifted lattice rules constructed through the reduced CBC construction.}

%%%%%%%%%%%%%%%%%%%%%%%%%%%%%%%%%%%%%%%%%%%%%%%%%%%%%%%%%%
\section{Introduction and Problem Setting}

We consider the parametric elliptic Dirichlet problem given by
\begin{align} \label{eq:PDE}
	- \nabla \cdot (a(\bsx,\bsy)\,\nabla u(\bsx,\bsy)) 
	&= 
	f(\bsx) \;\;\; \mbox{for} \,\, \bsx \in D \subset \R^d,
	\;\;\; u(\bsx,\bsy) = 0 \;\;\; \mbox{for} \,\, \bsx \,\, \mbox{on} \,\, \partial D,
\end{align}
for $D \subset \R^d$ a bounded, convex Lipschitz polyhedron domain with boundary $\partial D$ and fixed spatial dimension 
$d \in \{1,2,3\}$. The function $f$ lies in $L^2(D)$, the parametric variable $\bsy = (y_j)_{j\ge1}$ belongs to a domain $U$, 
and the differential operators are understood to be with respect to the physical variable $\bsx \in D$. Here we study the ``uniform case'', 
i.e., we assume that $\bsy$ is uniformly distributed on $U :=\left[-\frac12,\frac12\right]^{\N}$ with uniform probability measure 
$\mu(\rmd\bsy) = \bigotimes_{j\geq 1} \rmd y_j = \rmd\bsy$.
The parametric diffusion coefficient $a(\bsx,\bsy)$ is assumed to depend linearly on the parameters $y_j$ in the following way,
\begin{equation} \label{eq:diff_coeff}
	a(\bsx,\bsy)
	= 
	a_0(\bsx) + \sum_{j\geq 1} y_j\, \psi_j(\bsx)\,, \quad \bsx \in D, \quad \bsy \in U.
\end{equation}
For the variational formulation of \eqref{eq:PDE}, we consider the Sobolev space $V = H_0^1(D) $ of functions $v$
which vanish on the boundary $\partial D$ with norm
\begin{equation*}
	\|v\|_V := \left( \int_D \sum_{j=1}^{d} |\partial_{x_j} v(\bsx)|^2 \rd\bsx \right)^{\frac12} = \|\nabla v\|_{L^2(D)}.
\end{equation*}
The corresponding dual space of bounded linear functionals on $V$ with respect to the pivot space $L^2(D)$ is
further denoted by $V^* = H^{-1}(D)$. Then, for given $f\in V^*$ and $\bsy\in U$, the weak (or variational) formulation of~\eqref{eq:PDE} 
is to find $u(\cdot,\bsy)\in V$ such that
\begin{equation} \label{eq:PDE_weak}
	A(\bsy;u(\cdot,\bsy), v) 
	= 
	\langle f,v\rangle_{V^*\times V} = \int_D f(\bsx) v(\bsx)\,\rd\bsx \quad\mbox{for all}\quad v \in V,
\end{equation}
with parametric bilinear form $A: U \times V \times V \to \R$ given by
\begin{equation} \label{eq:bilinear_form}
	A(\bsy; w,v) 
	:=
	\int_D a(\bsx,\bsy)\,\nabla w(\bsx)\cdot\nabla v(\bsx)\,\rd\bsx \quad\mbox{for all}\quad w, v\in V,
\end{equation}
and duality pairing $\langle\cdot,\cdot\rangle_{V^* \times V}$ between $V^*$ and $V$. We will often 
identify elements $\varphi \in V$ with dual elements $L_\varphi \in V^*$. Indeed, for $\varphi \in V$ and $v\in V$, a bounded linear 
functional is given via $L_\varphi(v) := \int_D \varphi(\bsx) v(\bsx) \rd\bsx = \langle \varphi,v \rangle_{L^2(D)}$ and by the Riesz representation 
theorem there exists a unique representer $\widetilde{\varphi} \in V$ such that $L_\varphi(v) = \langle \widetilde\varphi,v \rangle_{L^2(D)}$
for all $v\in V$. Hence,  the definition of the canonical duality pairing yields that
$\langle L_\varphi,v \rangle_{V^* \times V} = L_\varphi(v) = \langle \varphi,v \rangle_{L^2(D)}$.

Our quantity of interest is the expected value, with respect to $\bsy\in U$, of a given bounded linear 
functional $G \in V^*$ applied to the solution $u(\cdot,\bsy)$ of the PDE.
We therefore seek to approximate this expectation by numerically integrating $G$ applied 
to a finite element approximation $u_h^s(\cdot,\bsy)$ of the solution $u^s(\cdot,\bsy) \in H_0^1(D) = V$
of \eqref{eq:PDE_weak} with truncated diffusion coefficient $a(\bsx,(\bsy_{\{1:s\}};0))$ where $\{1:s\}:=\{1,\ldots,s\}$ and we write 
$(\bsy_{\{1:s\}};0) = (\tilde{y}_j)_{j \ge 1}$ with $\tilde{y}_j = y_j$ for $j \in \{1:s\}$ and $\tilde{y}_j = 0$ otherwise; that is,
\begin{equation} \label{eq:QoI}
	\EE[G(u)]
	:=
	\int_{U} G(u(\cdot,\bsy))\,\mu(\rmd\bsy)
	=
	\int_{U} G(u(\cdot,\bsy))\,\rd\bsy
	\approx
	Q_N(G(u_h^s)) ,
\end{equation}
with $Q_N(\cdot)$ a linear quadrature rule using $N$ function evaluations.
The infinite-dimensional integral $\EE[G(u)]$ in~\eqref{eq:QoI} is defined as
\begin{equation*}
	\EE[G(u)]
	=
	\int_{U} G(u(\cdot,\bsy))\,\rd\bsy
	:= 
	\lim_{s\to\infty} \int_{\left[-\frac12,\frac12\right]^s} G(u(\cdot,(y_1,\ldots,y_s,0,0,\ldots)))\,\rd y_1\cdots\rd y_s
\end{equation*}
such that our integrands of interest are of the form $F(\bsy) = G(u(\cdot,\bsy))$ with $\bsy \in U$. In this article, 
we will employ (randomized) QMC methods of the form
\begin{equation*}
	Q_N(f) = \frac 1N \sum_{k=1}^{N} F(\bst_k),
\end{equation*}
i.e., equal-weight quadrature rules with (randomly shifted) deterministic points $\bst_1,\ldots,\bst_{N} \in \left[-\frac12,\frac12\right]^s$. This elliptic PDE is a standard problem considered in the numerical analysis of computational methods in uncertainty quantification, see, e.g., \cite{BCM17,CDS06,DKLNS14,GHS18,HS18,K17,KN16,KSS12}.

%%%%%%%%%%%%%%%%%%%%%%%%%%%%%%%%%%%%%%%%%%%%%%%%%%%%%%%%%%
\subsection{Existence of solutions of the variational problem}
\label{subsec:PDE}

To assure that a unique solution to the weak problem \eqref{eq:PDE_weak} exists, we need certain conditions 
on the diffusion coefficient $a$. We assume 
$a_0 \in L^{\infty}(D)$ and $\essinf_{\bsx \in D} a_0(\bsx) > 0$,
which is equivalent to the existence of two constants $0 < a_{0,\min} \le a_{0,\max} < \infty$ such that
a.e.\ on $D$ we have
\begin{equation} \label{eq:bounds_a_1}
	a_{0,\min} \le a_0(\bsx) \le a_{0,\max} ,
\end{equation}
and that there exists a $\overline{\kappa} \in (0,1)$ such that
\begin{equation} \label{eq:kappa_bar}
	\left\| \sum_{j \ge 1} \frac{|\psi_j|}{2 a_0} \right\|_{L^{\infty}(D)}
	\le
	\overline{\kappa}
	<
	1 .
\end{equation}
Via \eqref{eq:kappa_bar}, we obtain that $|\sum_{j \ge 1} y_j \psi_j(\bsx)| \le \overline{\kappa} \, a_0(\bsx)$ and 
hence, using \eqref{eq:bounds_a_1}, almost everywhere on $D$ and for any $\bsy \in U$
\begin{align} \label{eq:bound_a_2}
	0 < (1-\overline{\kappa}) \, a_{0,\min} \le a_0(\bsx) + \sum_{j \ge 1} y_j \psi_j(\bsx) = a(\bsx,\bsy) \le (1+\overline{\kappa}) \, a_{0,\max} .
\end{align}
These estimates yield the continuity and coercivity of $A(\bsy,\cdot,\cdot)$ defined in \eqref{eq:bilinear_form}
on $V \times V$, uniformly for all $\bsy \in U$. The Lax--Milgram theorem then ensures the existence of a unique solution 
$u(\cdot,\bsy)$ of the weak problem in \eqref{eq:PDE_weak}.

\subsection{Parametric regularity}
\label{subsec:PDE}

Having established the existence of unique weak parametric solutions $u(\cdot,\bsy)$, we investigate their regularity in terms of the 
behaviour of their mixed first-order derivatives. 
Our analysis combines multiple techniques which can be found in the literature, see, e.g., \cite{CDS06,GHS18,K17,HS18,BCM17}. In particular we want to point out that our POD form bounds can take advantage of wavelet like expansions of the random field, a technique introduced in \cite{BCM17} and used to the advantage of QMC constructions by \cite{HS18} to deliver product weights to save on the construction compared to POD weights. Although we end up again with POD weights, we will save on the construction cost by making use of a special construction method, called the reduced CBC construction, which we will introduce in~Section \ref{sec:fast-reduced-CBC-POD}.
Let $\bsnu = (\nu_j)_{j \ge 1}$ with $\nu_j \in \N_0 := \{0,1,2,\ldots\}$ be a sequence of positive integers which we will refer to as a multi-index. We define the order $|\bsnu|$ and the support $\supp(\bsnu)$ as
\begin{equation*}
	|\bsnu| := \sum_{j \ge 1} \nu_j
	\quad \text{and} \quad
	\supp(\bsnu) := \{j \ge 1 : \nu_j > 0 \}
\end{equation*}
and introduce the sets $\calF$ and $\calF_1$ of finitely supported multi-indices as
\begin{equation*}
	\calF
	:=
	\{ \bsnu \in \NN_0^\bbN : \supp(\bsnu) < \infty \}
	\quad \text{and} \quad
	\calF_1
	:=
	\{ \bsnu \in \{0,1\}^\bbN : \supp(\bsnu) < \infty \} ,
\end{equation*}
where $\calF_1 \subseteq \calF$ is the restriction containing only $\bsnu$ with $\nu_j \in \{0,1\}$.
Then, for $\bsnu \in \calF$ denote the $\bsnu$-th partial derivative with respect to the parametric variables $\bsy \in U$ by
\begin{equation*}
	\partial^{\bsnu}
	=
	\frac{\partial^{|\bsnu|}}{\partial y_1^{\nu_1}\partial y_2^{\nu_2}\cdots},
\end{equation*}
and for a sequence $\bsb = (b_j)_{j\ge 1} \subset \R^{\N}$, set $\bsb^\bsnu := \prod_{j\ge 1} b_j^{\nu_j}$.
We further write $\bsomega \le \bsnu$ if $\omega_j \le \nu_j$ for all $j \ge 1$ and denote by $\bse_i \in \calF_1$ the multi-index
with components $e_j = \delta_{i,j}$.  
For a fixed $\bsy \in U$, we introduce the energy norm $\|\cdot\|_{a_\bsy}^2$ in the space $V$ via
\begin{equation*}
	\|v\|_{a_\bsy}^2
	:=
	\int_{D} a(\bsx,\bsy) \, |\nabla v(\bsx)|^2 \, \rd \bsx
\end{equation*}
for which it holds true by \eqref{eq:bound_a_2} that 
\begin{equation} \label{eq:connection_norms}
	(1-\overline{\kappa}) \, a_{0,\min} \|v\|_V^2 \le \|v\|_{a_\bsy}^2 \quad \text{for all} \quad v \in V .
\end{equation}
Consequently, we have that $(1-\overline{\kappa}) \, a_{0,\min} \|u(\cdot,\bsy)\|_V^2 \le \|u(\cdot,\bsy)\|_{a_\bsy}^2$
and hence the definition of the dual norm $\|\cdot\|_{V^*}$ yields the following initial estimate from~\eqref{eq:PDE_weak} and~\eqref{eq:bilinear_form},
\begin{align*}
	\|u(\cdot,\bsy)\|_{a_\bsy}^2 
	&= 
	\int_{D} a(\bsx,\bsy) \, |\nabla u(\bsx,\bsy)|^2 \, \rd \bsx
	=
	\int_{D} f(\bsx) u(\bsx,\bsy) \, \rd \bsx \\
	&=
	\langle f,u(\cdot,\bsy) \rangle_{V^* \times V}
	\le
	\|f\|_{V^*} \|u(\cdot,\bsy)\|_V
	\le 
	\frac{\|f\|_{V^*} \|u(\cdot,\bsy)\|_{a_\bsy}}{\sqrt{(1-\overline{\kappa}) a_{0,\min}}}
\end{align*}
which gives in turn
\begin{equation} \label{est:norm_u_a}
	\|u(\cdot,\bsy)\|_{a_\bsy}^2 
	\le 
	\frac{\|f\|_{V^*}^2 }{(1-\overline{\kappa}) \, a_{0,\min}} .
\end{equation}
In order to exploit the decay of the norm sequence $(\|\psi_j\|_{L^\infty(D)})_{j \ge 1}$ of the basis functions, we extend
condition \eqref{eq:kappa_bar} as follows. To characterize the smoothness of the random field, we assume that there exist a sequence of reals $\bsb = (b_j)_{j\ge1}$ with $0 < b_j \le 1$ for all $j$, a constant $\kappa \in (0,1)$ and therefore also constants $\widetilde{\kappa}(\bsnu) \le \kappa$ for all $\bsnu \in \calF_1$ such that
\begin{align} \label{def:kappa}
	\kappa
	&:=
	\left\| \sum_{j \ge 1} \frac{|\psi_j| /b_j }{2 a_0} \right\|_{L^{\infty}(D)} 
	< 
	1
	,
	&
	\widetilde{\kappa}(\bsnu) = \left\| \sum_{j \in \supp(\bsnu)} \frac{|\psi_j| / b_j}{2a_0} \right\|_{L^{\infty}(D)}
	.
\end{align}
We remark that condition \eqref{eq:kappa_bar} is included in this assumption by letting $b_j = 1$ for all $j \ge 1$
and that $0 < \overline{\kappa} \le \kappa < 1$. Using the above estimations we can derive the following theorem for 
the mixed first-order partial derivatives.
\begin{theorem} \label{thm:deriv_bound}
	Let $\bsnu \in \calF_1$ be a multi-index of finite support and let $k \in \{0,1,\ldots,|\bsnu|\}$.
	Then, for every $f\in V^*$ and every $\bsy\in U$, 
	\begin{equation*}
		\sum_{\substack{\bsomega \le \bsnu \\ |\bsomega| = k}} \bsb^{-2\bsomega} \|\partial^{\bsomega}u(\cdot,\bsy)\|_{V}^2
		\le
		\left( \left(\frac{2 \widetilde{\kappa}(\bsnu)}{1-\overline{\kappa}}\right)^{k} \frac{\|f\|_{V^*} }{(1-\overline{\kappa}) \, a_{0,\min}} \right)^2,
	\end{equation*}
	with $\widetilde{\kappa}(\bsnu)$ as in~\eqref{def:kappa}. Moreover, for $k = |\bsnu|$ we obtain
	\begin{equation*}
		\|\partial^{\bsnu} u(\cdot,\bsy) \|_{V}
		\le
		\bsb^{\bsnu} \left(\frac{2\widetilde{\kappa}(\bsnu)}{1-\overline{\kappa}}\right)^{|\bsnu|} \frac{\|f\|_{V^*}}{(1-\overline{\kappa}) \, a_{0,\min}} .
	\end{equation*}
\end{theorem}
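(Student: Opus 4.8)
The plan is to run the by-now-standard recursive argument for parametric regularity (see, e.g., \cite{CDS06,HS18,BCM17}), but carried out for the rescaled quantities $\bsb^{-2\bsomega}\|\partial^{\bsomega}u(\cdot,\bsy)\|_{a_\bsy}^2$ and working throughout with the energy norm $\|\cdot\|_{a_\bsy}$, converting to $\|\cdot\|_V$ only at the very end via \eqref{eq:connection_norms}. Concretely, for fixed $\bsy\in U$ and fixed $\bsnu\in\calF_1$ set $T_k := \sum_{\bsomega\le\bsnu,\,|\bsomega|=k}\bsb^{-2\bsomega}\|\partial^{\bsomega}u(\cdot,\bsy)\|_{a_\bsy}^2$, which is a finite sum because $\bsnu$ has finite support. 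The goal is to prove the recursion $T_k\le\big(\tfrac{2\widetilde{\kappa}(\bsnu)}{1-\overline{\kappa}}\big)^2 T_{k-1}$ and then iterate down to $T_0$.

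First I would differentiate the weak formulation \eqref{eq:PDE_weak}. Since the right-hand side does not depend on $\bsy$ and $a(\bsx,\cdot)$ is affine (so $\partial^{\bse_j}a = \psi_j$ and $\partial^{\bsm}a = 0$ for any $\bsm$ that is neither $\bszero$ nor some $\bse_j$), the Leibniz rule applied to \eqref{eq:bilinear_form} gives, for every $\bszero\ne\bsomega\in\calF_1$ and all $v\in V$,
\[
  \int_D a(\bsx,\bsy)\,\nabla(\partial^{\bsomega}u)(\bsx,\bsy)\cdot\nabla v(\bsx)\,\rd\bsx = -\sum_{j\in\supp(\bsomega)}\int_D \psi_j(\bsx)\,\nabla(\partial^{\bsomega-\bse_j}u)(\bsx,\bsy)\cdot\nabla v(\bsx)\,\rd\bsx ;
\]
that $\bsy\mapsto u(\cdot,\bsy)$ is differentiable to every finite order with derivatives in $V$ is obtained along the way by the same induction, and I would state this as standard. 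Choosing $v=\partial^{\bsomega}u(\cdot,\bsy)$, using $|\psi_j| = 2\,a_0\,b_j\,\beta_j$ with $\beta_j:=|\psi_j|/(2a_0 b_j)$, and $a_0\le a(\cdot,\bsy)/(1-\overline{\kappa})$ from \eqref{eq:bound_a_2}, yields the per-$\bsomega$ estimate
\[
  \|\partial^{\bsomega}u(\cdot,\bsy)\|_{a_\bsy}^2 \le \frac{2}{1-\overline{\kappa}}\sum_{j\in\supp(\bsomega)} b_j \int_D a(\bsx,\bsy)\,\beta_j(\bsx)\,\big|\nabla\partial^{\bsomega-\bse_j}u\big|\,\big|\nabla\partial^{\bsomega}u\big|\,\rd\bsx .
\]

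Then I would multiply by $\bsb^{-2\bsomega}$, sum over $\bsomega\le\bsnu$ with $|\bsomega|=k$, and reindex each summand by $\bseta:=\bsomega-\bse_j$ (so $\bseta\le\bsnu$, $|\bseta|=k-1$, $j\in\supp(\bsnu)$), using $\bsb^{-2\bsomega}b_j = \bsb^{-2\bseta}b_j^{-1}$. Now apply Cauchy--Schwarz three times: once on $D$ to bound each integral by $\big(\int_D a\beta_j|\nabla\partial^{\bseta}u|^2\big)^{1/2}\big(\int_D a\beta_j|\nabla\partial^{\bseta+\bse_j}u|^2\big)^{1/2}$, and once over the pair index $(\bseta,j)$, splitting the weight as $\bsb^{-2\bseta}b_j^{-1} = (\bsb^{-2\bseta})^{1/2}\,(\bsb^{-2\bseta}b_j^{-2})^{1/2}$. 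Using $\sum_{j\in\supp(\bsnu)}\beta_j\le\widetilde{\kappa}(\bsnu)$ a.e.\ on $D$, the first resulting factor reassembles into $(\widetilde{\kappa}(\bsnu)\,T_{k-1})^{1/2}$; the second, after noting $\bsb^{-2\bseta}b_j^{-2} = \bsb^{-2(\bseta+\bse_j)}$ and collecting, for each $\bsomega$ of order $k$, the $|\supp(\bsomega)|$ pairs $(\bseta,j)$ mapping to it (again with $\sum_{j\in\supp(\bsomega)}\beta_j\le\widetilde{\kappa}(\bsnu)$), reassembles into $(\widetilde{\kappa}(\bsnu)\,T_k)^{1/2}$. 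This gives the self-referential inequality $T_k\le\frac{2\widetilde{\kappa}(\bsnu)}{1-\overline{\kappa}}\,T_{k-1}^{1/2}\,T_k^{1/2}$, hence $T_k\le\big(\tfrac{2\widetilde{\kappa}(\bsnu)}{1-\overline{\kappa}}\big)^2 T_{k-1}$ (trivial if $T_k=0$, otherwise divide by $T_k^{1/2}$). Iterating down to $T_0=\|u(\cdot,\bsy)\|_{a_\bsy}^2$ and invoking \eqref{est:norm_u_a} gives $T_k\le\big(\tfrac{2\widetilde{\kappa}(\bsnu)}{1-\overline{\kappa}}\big)^{2k}\,\|f\|_{V^*}^2/((1-\overline{\kappa})a_{0,\min})$; one application of \eqref{eq:connection_norms} on the left-hand side then produces the first claimed bound, and the second statement is the case $k=|\bsnu|$, for which the sum defining $T_{|\bsnu|}$ has the single term $\bsomega=\bsnu$ and one only takes square roots. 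I expect the main obstacle to be the bookkeeping in the triple Cauchy--Schwarz step: keeping the weights $\bsb^{-2\bsomega}$ and the support constraint $\supp(\bsomega)\subseteq\supp(\bsnu)$ aligned so that exactly $T_k$ (rather than a larger or differently weighted sum) reappears on the right, which is precisely what makes $\widetilde{\kappa}(\bsnu)$ appear in the bound instead of $\kappa$.
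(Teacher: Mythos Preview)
Your proposal is correct and follows essentially the same route as the paper: work in the energy norm, differentiate the weak formulation to obtain the recursive identity, apply Cauchy--Schwarz to split the sum over $(\bsomega,j)$ into a $T_{k-1}$-factor and a $T_k$-factor (each picking up $\widetilde{\kappa}(\bsnu)$ via $\sum_{j\in\supp(\bsnu)}|\psi_j|/(2a_0 b_j)\le\widetilde{\kappa}(\bsnu)$), derive the self-referential inequality $T_k\le\tfrac{2\widetilde{\kappa}(\bsnu)}{1-\overline{\kappa}}\,T_{k-1}^{1/2}T_k^{1/2}$, iterate, and convert to the $V$-norm via \eqref{eq:connection_norms} at the end. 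The only cosmetic difference is that the paper packages your two Cauchy--Schwarz steps into a single application in a custom inner product $\langle\cdot,\cdot\rangle_{\bsnu,k}$ and extracts the factor $2/(1-\overline{\kappa})$ after rather than before the Cauchy--Schwarz step.
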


\begin{proof}
	For the special case $\bsnu = \bszero$, the claim follows by combining \eqref{eq:connection_norms} and \eqref{est:norm_u_a}.
	For $\bsnu \in \calF_1$ with $\abs{\bsnu} > 0$, as is known from, e.g., \cite{CDS06} and \cite[Appendix]{KN16}, the linearity of $a(\bsx,\bsy)$   
	gives rise to the following identity for any $\bsy \in U$:
	\begin{equation} \label{eq:deriv_leibniz_form}
		\|\partial^{\bsnu} u(\cdot,\bsy) \|_{a_\bsy}^2 
		=
		- \sum_{j \in \supp(\bsnu)} \int_{D} \psi_j(\bsx) \, \nabla \partial^{\bsnu - \bse_j} u(\bsx,\bsy) 
		\cdot \nabla \partial^{\bsnu} u(\bsx,\bsy) \, \rd \bsx .
	\end{equation}
	For sequences of $L^2(D)$-integrable functions $\bsf = (f_{\bsomega,j})_{\bsomega \in \calF, j \ge 1}$ with $f_{\bsomega,j}: D \to \R$, we define the inner product $\langle \bsf, \bsg \rangle_{\bsnu,k}$ as follows,
	\begin{equation*}
		\langle \bsf, \bsg \rangle_{\bsnu,k}
		:=
		\sum_{\substack{\bsomega \le \bsnu \\ |\bsomega| = k}} \int_{D} \sum_{j \in \supp(\bsomega)}
		f_{\bsomega,j}(\bsx) \, g_{\bsomega,j}(\bsx) \, \rd \bsx .
	\end{equation*}
	We can then apply the Cauchy--Schwarz inequality to $\bsf = (f_{\bsomega,j})$ and $\bsg = (g_{\bsomega,j})$ with 
	$f_{\bsomega,j} = \bsb^{-\bse_j/2} |\psi_j|^{\frac12} \,  \bsb^{-(\bsomega-\bse_j)} \nabla \partial^{\bsomega - \bse_j} u(\cdot,\bsy)$ and $g_{\bsomega,j} = \bsb^{-\bse_j/2} |\psi_j|^{\frac12} \,  \bsb^{-\bsomega} \nabla \partial^{\bsomega} u(\cdot,\bsy)$ to obtain,
	with the help of \eqref{eq:deriv_leibniz_form},
	\begin{align*}
		&\sum_{\substack{\bsomega \le \bsnu \\ |\bsomega| = k}} \bsb^{-2\bsomega} \|\partial^{\bsomega}u(\cdot,\bsy)\|_{a_{\bsy}}^2 \\
		&\quad=
		-\sum_{\substack{\bsomega \le \bsnu \\ |\bsomega| = k}} \int_{D} \sum_{j \in \supp(\bsomega)} \bsb^{-\bse_j} \bsb^{-(\bsomega-\bse_j)} \bsb^{-\bsomega} \psi_j(\bsx) \, 
		\nabla \partial^{\bsomega - \bse_j} u(\bsx,\bsy) \cdot \nabla \partial^{\bsomega} u(\bsx,\bsy) \, \rd \bsx \\
		&\quad\le
		\left( \int_{D} \sum_{\substack{\bsomega \le \bsnu \\ |\bsomega| = k}}  \sum_{j \in \supp(\bsomega)} \bsb^{-\bse_j} |\psi_j(\bsx)| \, 
		\left| \bsb^{-(\bsomega-\bse_j)} \nabla \partial^{\bsomega - \bse_j} u(\bsx,\bsy) \right|^2 \rd \bsx \right)^{\frac12} \\
		&\quad\qquad 
		\times\left( \int_{D} \sum_{\substack{\bsomega \le \bsnu \\ |\bsomega| = k}}  \sum_{j \in \supp(\bsomega)} \bsb^{-\bse_j} |\psi_j(\bsx)| \, 
		\left| \bsb^{-\bsomega} \nabla \partial^{\bsomega} u(\bsx,\bsy) \right|^2 \rd \bsx \right)^{\frac12} .
	\end{align*}
	The first of the two factors above is then bounded as follows,
	\begin{align*}
		&\int_{D} \sum_{\substack{\bsomega \le \bsnu \\ |\bsomega| = k}}  \sum_{j \in \supp(\bsomega)} \bsb^{-\bse_j} 
		|\psi_j(\bsx)| \, 
		\left| \bsb^{-(\bsomega-\bse_j)} \nabla \partial^{\bsomega - \bse_j} u(\bsx,\bsy) \right|^2 \rd \bsx \\
		&\quad=
		\int_{D} \sum_{\substack{\bsomega \le \bsnu \\ |\bsomega| = k-1}} \Bigg(\sum_{\substack{j \in \supp(\bsnu) \\ \bsomega + \bse_j \le \bsnu}} 
		\bsb^{-\bse_j} |\psi_j(\bsx)| \Bigg) \left| \bsb^{-\bsomega} \nabla \partial^{\bsomega} u(\bsx,\bsy) \right|^2 \rd \bsx \\
		&\quad\le
		\left\| \sum_{j \in \supp(\bsnu)} \frac{|\psi_j| / b_j}{a(\cdot,\bsy)} \right\|_{L^{\infty}(D)} 
		\sum_{\substack{\bsomega \le \bsnu \\ |\bsomega| = k-1}} \bsb^{-2\bsomega} \int_{D} a(\bsx,\bsy) \left| \nabla \partial^{\bsomega} u(\bsx,\bsy) \right|^2 \rd \bsx \\
		&\quad=
		\left\| \sum_{j \in \supp(\bsnu)} \frac{|\psi_j| / b_j}{a(\cdot,\bsy)} \right\|_{L^{\infty}(D)} 
		\sum_{\substack{\bsomega \le \bsnu \\ |\bsomega| = k-1}} \bsb^{-2\bsomega} \|\partial^{\bsomega}u(\cdot,\bsy)\|_{a_{\bsy}}^2 ,
	\end{align*}
	while the other factor can be bounded trivially. Furthermore, using \eqref{eq:bound_a_2}, we have for any $\bsy \in U$
	\begin{equation*}
		\left\| \sum_{j \in \supp(\bsnu)} \frac{|\psi_j| / b_j}{a(\cdot,\bsy)} \right\|_{L^{\infty}(D)}
		\le
		\frac{1}{1-\overline{\kappa}} \left\| \sum_{j \in \supp(\bsnu)} \frac{|\psi_j| / b_j}{a_0} \right\|_{L^{\infty}(D)} := \frac{2 \widetilde{\kappa}(\bsnu)}{1-\overline{\kappa}},
	\end{equation*}
	so that, combining these three estimates, we obtain
	\begin{align*}
		&\sum_{\substack{\bsomega \le \bsnu \\ |\bsomega| = k}} \bsb^{-2\bsomega} \|\partial^{\bsomega}u(\cdot,\bsy)\|_{a_{\bsy}}^2 \\
		&\phantom{=}\le
		\frac{2 \widetilde{\kappa}(\bsnu)}{1-\overline{\kappa}}
		\left( \sum_{\substack{\bsomega \le \bsnu \\ |\bsomega| = k-1}} \bsb^{-2\bsomega} \|\partial^{\bsomega}u(\cdot,\bsy)\|_{a_{\bsy}}^2 \right)^{\frac12}
		\left( \sum_{\substack{\bsomega \le \bsnu \\ |\bsomega| = k}} \bsb^{-2\bsomega} \|\partial^{\bsomega}u(\cdot,\bsy)\|_{a_{\bsy}}^2 \right)^{\frac12} .
	\end{align*}
	Therefore, we finally obtain that 
	\begin{equation*} 
		\sum_{\substack{\bsomega \le \bsnu \\ |\bsomega| = k}} \bsb^{-2\bsomega} \|\partial^{\bsomega}u(\cdot,\bsy)\|_{a_{\bsy}}^2
		\le
		\left(\frac{2 \widetilde{\kappa}(\bsnu)}{1-\overline{\kappa}}\right)^2
		\sum_{\substack{\bsomega \le \bsnu \\ |\bsomega| = k-1}} \bsb^{-2\bsomega} \|\partial^{\bsomega}u(\cdot,\bsy)\|_{a_{\bsy}}^2
	\end{equation*}
	which inductively gives
	\begin{equation*}
		\sum_{\substack{\bsomega \le \bsnu \\ |\bsomega| = k}} \bsb^{-2\bsomega} \|\partial^{\bsomega}u(\cdot,\bsy)\|_{a_{\bsy}}^2
		\le
		\left(\frac{2\widetilde{\kappa}(\bsnu)}{1-\overline{\kappa}}\right)^{2k} \|u(\cdot,\bsy)\|_{a_{\bsy}}^2
		\le
		\left(\frac{2\widetilde{\kappa}(\bsnu)}{1-\overline{\kappa}}\right)^{2k} \frac{\|f\|_{V^*}^2 }{(1-\overline{\kappa}) \, a_{0,\min}} ,
	\end{equation*}
	where the last inequality follows from the initial estimate \eqref{est:norm_u_a}. The estimate \eqref{eq:connection_norms} then gives
	\begin{align*}
		\sum_{\substack{\bsomega \le \bsnu \\ |\bsomega| = k}} \bsb^{2\bsomega} \|\partial^{\bsomega}u(\cdot,\bsy)\|_V^2
		&\le
		\frac{1}{(1-\overline{\kappa}) \, a_{0,\min}} \sum_{\substack{\bsomega \le \bsnu \\ |\bsomega| = k}} \bsb^{-2\bsomega} \|\partial^{\bsomega}u(\cdot,\bsy)\|_{a_{\bsy}}^2 \\
		&\le
		\left(\frac{2\widetilde{\kappa}(\bsnu)}{1-\overline{\kappa}}\right)^{2k} \frac{\|f\|_{V^*}^2 }{(1-\overline{\kappa})^2 \, a_{0,\min}^2},
	\end{align*}
	which yields the first claim. The second claim follows since the sum over the $\bsomega \le \bsnu$ with $|\bsomega|=|\bsnu|$ and $\bsnu \in \calF_1$ consists only of the term corresponding to $\bsomega=\bsnu$.
\end{proof}

\begin{corollary} \label{cor:deriv_bound} 
	Under the assumptions of Theorem \ref{thm:deriv_bound}, there exists a number $\kappa(k)$ for each $k \in \N$, given by
	\begin{equation*}
		\kappa(k) := \sup_{\substack{\bsnu \in \calF_1 \\ |\bsnu| = k}} \widetilde{\kappa}(\bsnu) ,
	\end{equation*}
	such that $\widetilde{\kappa}(\bsnu) \le \kappa(k) \le \kappa < 1$  for all $\bsnu \in \calF_1$  with $|\bsnu| = k$.
	Then for $\bsnu \in \calF_1$, every $f \in V^{\ast}$, and every $\bsy \in U$, the solution $u(\cdot,\bsy)$ satisfies
	\begin{equation} \label{eq:bound_deriv_pod}
		\|\partial^{\bsnu} u(\cdot,\bsy) \|_{V}
		\le
		\bsb^{\bsnu} \left(\frac{2\kappa(|\bsnu|)}{1-\overline{\kappa}}\right)^{|\bsnu|} \frac{\|f\|_{V^*}}{(1-\overline{\kappa}) \, a_{0,\min}} .
	\end{equation}
\end{corollary}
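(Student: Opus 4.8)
The plan is to obtain the corollary directly from \RefThm{thm:deriv_bound} together with the defining property of $\kappa$ in~\eqref{def:kappa}. First I would verify that $\kappa(k)$ is well-defined: by assumption~\eqref{def:kappa} we already have $\widetilde{\kappa}(\bsnu) \le \kappa$ for every $\bsnu \in \calF_1$, so the supremum defining $\kappa(k)$ is taken over a nonempty set of reals bounded above by $\kappa$; hence $\kappa(k)$ exists and satisfies $\kappa(k) \le \kappa < 1$. The inequality $\widetilde{\kappa}(\bsnu) \le \kappa(k)$ for all $\bsnu \in \calF_1$ with $|\bsnu| = k$ is then immediate from the definition of the supremum, which gives the full chain $\widetilde{\kappa}(\bsnu) \le \kappa(k) \le \kappa < 1$.

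For the bound~\eqref{eq:bound_deriv_pod}, I would start from the second assertion of \RefThm{thm:deriv_bound}, namely
\[
	\|\partial^{\bsnu} u(\cdot,\bsy)\|_V \le \bsb^{\bsnu}\left(\frac{2\widetilde{\kappa}(\bsnu)}{1-\overline{\kappa}}\right)^{|\bsnu|}\frac{\|f\|_{V^*}}{(1-\overline{\kappa})\,a_{0,\min}},
\]
valid for every $\bsnu \in \calF_1$ and every $\bsy \in U$. Since $1-\overline{\kappa} > 0$ and the map $t \mapsto \bigl(2t/(1-\overline{\kappa})\bigr)^{|\bsnu|}$ is nondecreasing on $[0,\infty)$, replacing $\widetilde{\kappa}(\bsnu)$ by the (not smaller) quantity $\kappa(|\bsnu|)$ can only enlarge the right-hand side, which is exactly~\eqref{eq:bound_deriv_pod}.

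The only points needing a word of care are the edge cases. For $\bsnu = \bszero$ the exponent $|\bsnu|$ equals $0$, so the factor involving $\kappa(0)$ drops out and the estimate reduces to $\|u(\cdot,\bsy)\|_V \le \|f\|_{V^*}/((1-\overline{\kappa})\,a_{0,\min})$, which is precisely the $\bsnu = \bszero$ case already established in \RefThm{thm:deriv_bound}; thus no value need be assigned to $\kappa(0)$. I expect no genuine obstacle: the corollary is a cosmetic reformulation that trades the support-dependent constants $\widetilde{\kappa}(\bsnu)$ for the order-dependent constants $\kappa(k)$, this being the form from which product-and-order-dependent (POD) weights will later be read off in the QMC error analysis.
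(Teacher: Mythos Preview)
Your proposal is correct and matches the paper's approach: the paper gives no separate proof for this corollary, treating it as an immediate consequence of \RefThm{thm:deriv_bound} together with the definition of $\kappa(k)$ as the supremum of the $\widetilde{\kappa}(\bsnu)$ over $|\bsnu|=k$. Your write-up simply makes explicit the monotonicity step and the edge case $\bsnu=\bszero$, which is fine.
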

Note that since $0 < \overline{\kappa} \le \kappa < 1$, the results of Theorem \ref{thm:deriv_bound} and Corollary \ref{cor:deriv_bound}
remain also valid for $\overline{\kappa}$ replaced by $\kappa$. 

The obtained bounds on the mixed first-order derivatives turn out to be of product and order-dependent (so-called POD) form; that is, they are of the general form 
\begin{equation} \label{eq:form_pod_bounds}
	\|\partial^{\bsnu} u(\cdot,\bsy) \|_{V}
	\le
	C \, \bsb^{\bsnu} \, \Gamma(|\bsnu|) \, \|f\|_{V^{\ast}}
\end{equation}
with a map $\Gamma: \N_0 \to \R$, a sequence of reals $\bsb = (b_j)_{j \ge 1} \in \R^{\N}$ and some constant $C \in \R_{+}$.
This finding motivates us to consider this special type of bounds in the following error analysis.

%%%%%%%%%%%%%%%%%%%%%%%%%%%%%%%%%%%%%%%%%%%%%%%%%%%%%%%%%%
\section{Quasi-Monte Carlo finite element error}

We analyze the error $\EE[G(u)] - Q_N(G(u_h^s))$ obtained by applying QMC
rules to the finite element approximation $u_h^s$ to approximate the expected value
\begin{equation*}
  \EE[G(u)] = \int_{U} G(u(\cdot,\bsy))\,\rd\bsy
  .
\end{equation*} 
To this end, we introduce the finite element approximation $u_h^s(\bsx,\bsy) := u_h(\bsx,(\bsy_{\{1:s\}};0))$ of a solution of \eqref{eq:PDE_weak} with truncated diffusion coefficient $a(\bsx,(\bsy_{\{1:s\}};0))$, where $u_h$ is a finite element approximation as defined in \eqref{eq:PDE_weak u_h} and $(\bsy_{\{1:s\}};0)=(y_1,\ldots,y_s,0,0,\ldots)$. The overall absolute QMC finite element error is then bounded as follows
	\begin{align}
	&| \EE[G(u)] - Q_N(G(u_h^s)) | \nonumber \\
	&\quad=
	| \EE[G(u)] - \EE[G(u^s)] + \EE[G(u^s)] - \EE[G(u_h^s)] + \EE[G(u_h^s)] - Q_N(G(u_h^s)) | \nonumber \\
	&\quad\le
	|\EE[G(u-u^s)]| + |\EE[G(u^s-u_h^s)]| + |\EE[G(u_h^s)] - Q_N(G(u_h^s))| \label{eq:error_split} .
	\end{align}
The first term on the right hand side of \eqref{eq:error_split} will be referred to as (dimension) truncation error, 
the second term is the finite element discretization error and the last term is the QMC quadrature error
for the integrand $u_h^s$. In the following sections we will analyze these different error terms separately.

%%%%%%%%%%%%%%%%%%%%%%%%%%%%%%%%%%%%%%%%%%%%%%%%%%%%%%%%%%
\subsection{Finite Element Approximation}
\label{subsec:FEM}

Here, we consider the approximation of the solution $u(\cdot,\bsy)$ of \eqref{eq:PDE_weak} by a finite element
approximation $u_h(\cdot,\bsy)$ and assess the finite element discretization error. More specifically, denote by $\{V_h\}_{h>0}$ a family of subspaces $V_h \subset V$ of finite dimension $M_h$ such that $V_h \to V$ as $h \to 0$. We define the parametric finite element (FE) approximation as follows:
 for $f\in V^*$ and given $\bsy\in U$, find $u_h(\cdot,\bsy)\in V_h$ such that
\begin{equation} \label{eq:PDE_weak u_h}
	A(\bsy;u_h(\cdot,\bsy), v_h) = \langle f,v_h\rangle_{V^*\times V}
	= \int_D f(\bsx) v_h(\bsx)\,\rd\bsx \quad\mbox{for all}\quad v_h\in V_h .
\end{equation}
To establish convergence of the finite element approximations, we need some further conditions on $a(\bsx,\bsy)$.
To this end, we define the space $W^{1,\infty}(D) \subseteq L^\infty(D)$ endowed with the norm 
$\|v\|_{W^{1,\infty}(D)} = \max\{ \|v\|_{L^{\infty}(D)}, \|\nabla v\|_{L^{\infty}(D)} \}$ and require that
\begin{equation} \label{eq:cond_fem}
	a_0 \in W^{1,\infty}(D) \quad \text{and} \quad \sum_{j \ge 1} \|\psi_j\|_{W^{1,\infty}(D)} < \infty .
\end{equation}
Under these conditions and using that $f \in L^2(D)$, it was proven in \cite[Theorems 7.1 and 7.2]{KSS12} that for any $\bsy \in U$
the approximations $u_h(\cdot,\bsy)$ satisfy
\begin{equation*} \label{eq:uh_bound}
	\|u(\cdot,\bsy) - u_h(\cdot,\bsy)\|_V
	\le 
	C_1 \,h\, \|f\|_{L^2} .
\end{equation*}
In addition, if (the representer of) the bounded linear functional $G \in V^*$ lies in $L^2(D)$ we have for any $\bsy \in U$,
as $h \to 0$,
\begin{align} \label{eq:IG_uh_bound}
	|G(u(\cdot,\bsy)) - G(u_h(\cdot,\bsy))| \nonumber
	&\le 
	C_2 \,h^2\, \|f\|_{L^2}\, \|G\|_{L^2}, \\
	|\EE[G(u(\cdot,\bsy) - u_h(\cdot,\bsy))]| \,
	&\le 
	C_3 \,h^2\, \|f\|_{L^2}\, \|G\|_{L^2},
\end{align}
where the constants $C_1,C_2,C_3 > 0$ are independent of $h$ and $\bsy$. Since the above statements hold true for any $\bsy \in U$, they remain also valid for $u^s(\bsx,\bsy) := u(\bsx,(\bsy_{\{1:s\}};0))$ and $u_h^s(\bsx,\bsy) := u_h(\bsx,(\bsy_{\{1:s\}};0))$.

\subsection{Dimension Truncation}
\label{subsec:Truncation}

For every $s \in \N$ and $\bsy \in U$, we formally define the solution of the parametric weak problem \eqref{eq:PDE_weak} corresponding to the diffusion coefficient $a(\bsx,(\bsy_{\{1:s\}};0))$ with sum truncated to $s$ terms as
\begin{equation} \label{eq:truncated_sol}
	u^s(\cdot,\bsy) := u(\cdot, (\bsy_{\{1:s\}};0)) .
\end{equation}
In \cite[Proposition 5.1]{GHS18} it was shown that for the solution $u^s$ the following error estimates are satisfied.
\begin{theorem}
	Let $\overline{\kappa} \in (0,1)$ be such that \eqref{eq:kappa_bar} is satisfied and assume furthermore that there exists a 
	sequence of reals $\bsb = (b_j)_{j\ge1}$ with $0 < b_j \le 1$  for all $j$ and a constant $\kappa \in [\overline{\kappa},1)$ as defined
	in \eqref{def:kappa}. Then, for every $\bsy \in U$ and each $s \in \N$
	\begin{equation*}
		\|u(\cdot,\bsy) - u^s(\cdot,\bsy) \|_V
		\le
		\frac{a_{0,\max} \, \|f\|_{V^*}}{( a_{0,\min} (1-\overline{\kappa}))^2 } \sup_{j \ge s+1} b_j .
	\end{equation*}
	Moreover, if it holds for $\kappa$ that $\frac{\kappa \, a_{0,\max}}{(1-\overline{\kappa}) \, a_{0,\min}} \sup_{j \ge s+1} b_j < 1$,
	then for every $G \in V^*$ we have
	\begin{multline} \label{eq:truncation_IG_u}
		\left| \EE[G(u)] - \int_{\left[-\frac12,\frac12\right]^s} G(u^s(\cdot,(\bsy_{\{1:s\}};0))) \,\rd\bsy_{\{1:s\}} \right|
		\\\le
		\frac{\|G\|_{V^*} \, \|f\|_{V^*}}{(1-\overline{\kappa}) \, a_{0,\min} - a_{0,\max} \, \kappa \sup_{j \ge s+1} b_j}
		\left(\frac{a_{0,\max}}{(1-\overline{\kappa}) \, a_{0,\min}} \kappa \sup_{j \ge s+1} b_j \right)^2 .
	\end{multline}
\end{theorem}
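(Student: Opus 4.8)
The plan is to derive both estimates from the parametric regularity bounds of Theorem~\ref{thm:deriv_bound} (equivalently Corollary~\ref{cor:deriv_bound}) together with the a priori energy bound~\eqref{est:norm_u_a} and the norm equivalence~\eqref{eq:connection_norms}, following the strategy of \cite[Proposition~5.1]{GHS18}. The key observation is that the map $\bsy \mapsto u(\cdot,\bsy)$ is analytic in each coordinate, so that $u(\cdot,\bsy) - u^s(\cdot,\bsy)$ can be written as a telescoping/Taylor expansion in the tail variables $(y_j)_{j \ge s+1}$. Concretely, for fixed $\bsy$, I would introduce the path $t \mapsto u(\cdot,(\bsy_{\{1:s\}}; t\bsy_{\{s+1:\infty\}}))$ and use the fundamental theorem of calculus, which expresses the difference as an integral of $\sum_{j \ge s+1} y_j \, \partial^{\bse_j} u$ along the path; bounding $\|\partial^{\bse_j} u(\cdot,\bsy)\|_V \le b_j \,\tfrac{2\kappa}{1-\overline\kappa}\,\tfrac{\|f\|_{V^*}}{(1-\overline\kappa)a_{0,\min}}$ via Theorem~\ref{thm:deriv_bound} with $\bsnu = \bse_j$ (and $\widetilde\kappa(\bse_j) \le \kappa$), and using $|y_j| \le \tfrac12$, gives $\|u - u^s\|_V \le C \sup_{j \ge s+1} b_j$ after summing the geometric-type series controlled by $\kappa < 1$; tracking the constants carefully should produce the stated prefactor $a_{0,\max}\|f\|_{V^*}/(a_{0,\min}(1-\overline\kappa))^2$.

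For the second, sharper estimate on $|\EE[G(u)] - \int G(u^s)|$, the standard trick is that the \emph{first-order} term in the tail expansion integrates to zero: since $\bsy$ is symmetric about $0$ and $y_j \mapsto u(\cdot,\bsy)$ is (to leading order) odd in $y_j$, one has $\int_U y_j \, \partial^{\bse_j} u \,\rd\bsy = 0$, so the truncation error in the integrated quantity is genuinely second order in the tail. I would make this rigorous by expanding $G(u) - G(u^s)$ to second order via Taylor's theorem with integral remainder in the variables $\bsy_{\{s+1:\infty\}}$: the zeroth-order term vanishes identically, the first-order terms vanish upon integration against the symmetric measure $\mu$, and the remainder involves $\sum_{j,\ell \ge s+1} y_j y_\ell \, \partial^{\bse_j + \bse_\ell} u$. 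Bounding the mixed second derivatives through Corollary~\ref{cor:deriv_bound} (with $|\bsnu| = 2$, giving a factor $b_j b_\ell (2\kappa/(1-\overline\kappa))^2$) and $\|G\|_{V^*}$, then summing the double series — which converges because each tail sum $\sum_{j \ge s+1} b_j (\cdots)$ is dominated by $\kappa \sup_{j \ge s+1} b_j$ and the geometric factor $\tfrac{a_{0,\max}}{(1-\overline\kappa)a_{0,\min}}\kappa \sup_{j\ge s+1} b_j < 1$ by hypothesis — yields the quadratic bound with the denominator $(1-\overline\kappa)a_{0,\min} - a_{0,\max}\kappa\sup_{j\ge s+1}b_j$ coming from resumming the Neumann-type series.

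The main obstacle I anticipate is bookkeeping rather than conceptual: one must handle the infinite-dimensional integral carefully (justifying the interchange of limits $s' \to \infty$, the Taylor expansion, and integration term by term), and one must organize the combinatorics of the second-order remainder so that the double sum over tail indices collapses cleanly into the claimed closed form. Getting the \emph{exact} constants — in particular showing the denominator is precisely $(1-\overline\kappa)a_{0,\min} - a_{0,\max}\kappa\sup_{j\ge s+1}b_j$ and not merely a constant of that order — requires summing the full geometric series of higher-order terms rather than just truncating at second order, so I would actually carry the expansion to all orders in the tail, bound the $k$-th order term by $\big(\tfrac{a_{0,\max}}{(1-\overline\kappa)a_{0,\min}}\kappa\sup_{j\ge s+1}b_j\big)^k$ times $\|G\|_{V^*}\|f\|_{V^*}$-type factors, note the $k=0,1$ terms drop, and sum $\sum_{k \ge 2}$ of a geometric series to land exactly on~\eqref{eq:truncation_IG_u}. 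Since the theorem is quoted verbatim from \cite{GHS18}, I would in practice cite that reference for the detailed constant-chasing and only sketch the expansion argument here.
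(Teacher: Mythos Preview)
The paper does not supply its own proof of this theorem; it simply cites \cite[Proposition~5.1]{GHS18} and states the result. Your proposal --- a Taylor/Neumann expansion in the tail variables, with the first-order term vanishing by symmetry of the uniform measure and the remaining geometric series summed to recover the exact constants in~\eqref{eq:truncation_IG_u} --- is precisely the argument carried out in that reference, and your explicit plan to defer the constant-chasing to \cite{GHS18} matches what the paper itself does.
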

In the following subsection, we will discuss how to approximate the finite-dimensional integral of solutions
of the form \eqref{eq:truncated_sol} by means of QMC methods. 

\subsection{Quasi-Monte Carlo Integration}
\label{subsec:QMC}

For a real-valued function $F:[-\tfrac12,\tfrac12]^s \to \R$ defined over the $s$-dimensional
unit cube centered at the origin, we consider the approximation of the integral $I_s(F)$ by $N$-point QMC
rules $Q_N(F)$, i.e., 
\begin{equation*}
	I_s(F)
	:= \int_{[-\frac12,\frac12]^s} F(\bsy)\, \rd\bsy
	\,\approx\,
	\frac1N \sum_{k=1}^N F(\bst_{k})
	=:
	Q_N(F) ,
\end{equation*}
with quadrature points $\bst_1,\ldots,\bst_N \in [-\tfrac12,\tfrac12]^s$.
As a quality criterion of such a rule, we define the worst-case error for QMC integration in some Banach space $\calH$ as  
\begin{equation*}
	e^{\text{wor}}(\bst_1,\ldots,\bst_N)
	:=
	\sup_{\substack{F\in\calH\\ \|F\|_{\calH}\le 1}} |I_s(F) - Q_N(F)| .
\end{equation*}
In this article, we consider randomly shifted rank-1 lattice rules as randomized QMC rules, with underlying points of the form
\begin{equation*}
	\widetilde{\bst}_k(\bsDelta) = \left\{(k \bsz)/N + \bsDelta \right\} - \left(1/2,\ldots,1/2\right), \quad k=1,\ldots,N ,
\end{equation*}
with generating vector $\bsz \in \Z^s$, uniform random shift $\bsDelta \in [0,1]^s$ and component-wise applied fractional part, 
denoted by $\{ \bsx \}$. For simplicity, we denote the worst-case error using a shifted lattice rule with generating 
vector $\bsz$ and shift $\bsDelta$ by $e_{N,s}(\bsz,\bsDelta)$. 

For randomly shifted QMC rules, the probabilistic error bound 
\begin{equation*}
	\sqrt{\bbE_{\bsDelta}\left[ |I_s(F) - Q_N(F)|^2 \right]}
	\le
	\widehat{e}_{N,s}(\bsz) \, \|F\|_{\calH},
\end{equation*}
holds for all $F \in \calH$, with shift-averaged worst-case error 
\begin{equation*}
	\widehat{e}_{N,s}(\bsz)
	:= 
	\left(\int_{[0,1]^s} e^2_{N,s}(\bsz,\bsDelta) \,\rd\bsDelta \right)^{1/2} .
\end{equation*}
As function space $\calH$ for our integrands $F$, we consider the weighted, unanchored Sobolev space
$\calW_{s,\bsgamma}$, which is a Hilbert space of functions defined over 
$[-\frac{1}{2},\frac{1}{2}]^s$ with square integrable mixed first derivatives and general non-negative weights 
$\bsgamma = (\gamma_{\setu})_{\setu \subseteq \{1:s\}}$. More precisely, the norm for $F \in \calW_{s,\bsgamma}$ 
is given by
\begin{equation} \label{eq:sob_norm}
	\|F\|_{\calW_{s,\bsgamma}}
	:=
	\left(
	\sum_{\setu\subseteq\{1:s\}}
	\gamma_\setu^{-1}
	\int_{[-\frac{1}{2},\frac{1}{2}]^{|\setu|}}
	\left(
	\int_{[-\frac{1}{2},\frac{1}{2}]^{s-|\setu|}}
	\frac{\partial^{|\setu|}F}{\partial \bsy_\setu}(\bsy_\setu;\bsy_{-\setu})
	\,\rd\bsy_{-\setu}
	\right)^2 \,
	\rd\bsy_\setu
	\right)^{1/2},
\end{equation}
where $\{1:s\} := \{1,\ldots,s\}$, $\frac{\partial^{|\setu|}F}{\partial \bsy_\setu}$ denotes the mixed first
derivative with respect to the variables $\bsy_\setu = (y_j)_{j\in\setu}$ and we set 
$\bsy_{-\setu} = (y_j)_{j\in\{1:s\}\setminus\setu}$. \\

For the efficient construction of good lattice rule generating vectors, we consider the so-called reduced
component-by-component (CBC) construction introduced in \cite{DKLP15}.
For $b \in \N$ and $m \in \N_0$, we define the group of units of integers modulo $b^m$ via
\begin{equation*}
  \ZZ_{b^m}^{\times} := \left\{z\in \Z_{b^m}: \gcd(z,b^m)=1\right\},
\end{equation*}
and note that $\ZZ_{b^0}^{\times}=\ZZ_{1}^{\times}=\{0\}$ since $\gcd(0,1)=1$. 
Henceforth, let $b$ be prime and recall that then, for $m \ge 1$, $|\ZZ_{b^m}^{\times}|=\varphi(b^m)=b^{m-1} \varphi(b)$ 
and $|\ZZ_{b}^\times| = \varphi(b) = (b-1)$, where $\varphi$ is Euler's totient function.
Let $\bsw:=(w_j)_{j\ge 1}$ be a non-decreasing sequence of integers in $\NN_0$, the elements of which we will refer to as reduction indices.
In the reduced CBC algorithm the components $\widetilde{z}_j$ of the generating vector $\widetilde{\bsz}$ of the lattice rule will be taken as multiples of $b^{w_j}$.

In \cite{DKLP15}, the reduced CBC construction was introduced to construct rank-1 lattice rules for $1$-periodic 
functions in a weighted Korobov space $\calH(K_{s,\alpha,\bsgamma})$ of smoothness $\alpha$
(see, e.g., \cite{SW01}). We denote the worst-case error in $\calH(K_{s,\alpha,\bsgamma})$ using a 
rank-1 lattice rule with generating vector $\bsz$ by $e_{N,s}(\bsz)$. Following \cite{DKLP15}, the reduced CBC construction 
is then given in Algorithm~\ref{alg:RedCBCAlg}.

{\centering
	\begin{minipage}{\linewidth}
		\begin{algorithm}[H]
			\small
			\caption{\small Reduced component-by-component construction}	
			\label{alg:RedCBCAlg}
			\textbf{Input:} Prime power $N=b^m$ with $m \in \N_0$ and integer reduction indices $0 \le w_1 \le \cdots \le w_s$. \\[1.75mm]
			For $j$ from $1$ to $s$ and as long as $w_j<m$ do:
			\begin{itemize}
				\item[]
				\begin{itemize}
					\item[$\bullet$] Select $z_j \in \ZZ_{b^{m-w_j}}^{\times}$ such that
					\vspace{-7pt}
					\begin{equation*}
						z_j := \argmin_{z \in \ZZ_{b^{m-w_j}}^{\times}} e^2_{N,j}(b^{w_1} z_1,\ldots,b^{w_{j-1}} z_{j-1}, b^{w_j} z) .
					\end{equation*}
				\end{itemize}
			\end{itemize}
			Set all remaining $z_j := 0$ (for $j$ with $w_j \ge m$). \\[1.75mm]
			\textbf{Return:} Generating vector $\widetilde{\bsz}:=(b^{w_1} z_1,\ldots, b^{w_s} z_s)$ for $N=b^m$.
		\end{algorithm}
	\end{minipage}
}
\par
\vspace{5pt}
The following theorem, proven in \cite{DKLP15}, states that the algorithm yields generating 
vectors with a small integration error for general weights $\gamma_{\setu}$ in the Korobov space.

\begin{theorem} \label{thm:redcbc_korobov}
	For a prime power $N=b^m$ let $\widetilde{\bsz}=(b^{w_1} z_1,\ldots, b^{w_s} z_s)$  be constructed according to Algorithm~\ref{alg:RedCBCAlg} with integer reduction indices $0 \le w_1 \le \cdots \le w_s$. Then for every $d\in \{1:s\}$ and every $\lambda \in(1/\alpha,1]$ it holds for the worst-case error in the Korobov space $\calH(K_{s,\alpha,\bsgamma})$ with $\alpha > 1$ that
	\begin{equation*}
		e_{N,d}^2(b^{w_1} z_1,\ldots, b^{w_d} z_d)
		\le
		\left(\sum_{\emptyset\neq\setu\subseteq \{1:d\}} \gamma_\setu^\lambda 
		\, (2\zeta (\alpha\lambda))^{|\setu|} \, b^{\min\{m,\max_{j\in\setu}w_j\}}\right)^{\frac{1}{\lambda}}
		\left(\frac{2}{N}\right)^{\frac1\lambda}.
	\end{equation*}
\end{theorem}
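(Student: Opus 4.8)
This is the result established in \cite{DKLP15}; the argument I would give follows the component-by-component template adapted to the reduced construction, since for each $d$ the initial segment $(b^{w_1}z_1,\ldots,b^{w_d}z_d)$ is precisely what the reduced CBC algorithm produces in dimension~$d$. Start from the standard dual-lattice expression for the squared worst-case error in the weighted Korobov space,
\begin{equation*}
  e_{N,d}^2(b^{w_1}z_1,\ldots,b^{w_d}z_d)
  = \sum_{\emptyset\ne\setu\subseteq\{1:d\}}\gamma_\setu
  \sum_{\substack{\bsh\in(\Z\setminus\{0\})^{\setu}\\ \sum_{j\in\setu}h_j\,b^{w_j}z_j\equiv 0\ (\mathrm{mod}\ b^m)}}
  \ \prod_{j\in\setu}|h_j|^{-\alpha}.
\end{equation*}
The target is to show $e_{N,d}^2\le B_d^{1/\lambda}$, where $B_d:=\tfrac2N\sum_{\emptyset\ne\setu\subseteq\{1:d\}}\gamma_\setu^\lambda\,(2\zeta(\alpha\lambda))^{|\setu|}\,b^{\min\{m,\max_{j\in\setu}w_j\}}$, by induction on $d$ (the case $d=0$ being trivial). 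The constant $2\zeta(\alpha\lambda)=\sum_{h\ne0}|h|^{-\alpha\lambda}$ (finite because $\alpha\lambda>1$) and the exponent $1/\lambda$ enter through the elementary inequality $(\sum_k a_k)^\lambda\le\sum_k a_k^\lambda$, $\lambda\in(0,1]$, applied at each step to the freshly added terms.

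For the step, split the level-$d$ error as $e_{N,d-1}^2+\Theta_d(z_d)$, where $\Theta_d$ collects exactly those $\setu$ with $d\in\setu$; since $\bsw$ is non-decreasing and $\max\setu=d$ for all such $\setu$, one has $\max_{j\in\setu}w_j=w_d$, so the corresponding part of $B_d-B_{d-1}$ carries the single exponent $b^{\min\{m,w_d\}}$. If $w_d\ge m$, then $b^{w_d}z_d=0$ is forced by the algorithm, the congruence decouples $h_d$ from the remaining indices, and bounding $\Theta_d$ via the elementary inequality gives $\Theta_d\le(\sum_{\setu\ni d}\gamma_\setu^\lambda(2\zeta(\alpha\lambda))^{|\setu|})^{1/\lambda}=(B_d-B_{d-1})^{1/\lambda}$. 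If $w_d<m$, the algorithm selects $z_d\in\Z_{b^{m-w_d}}^{\times}$ minimising $\Theta_d$; applying the elementary inequality to $\Theta_d(z_d)$ for each fixed $z_d$ and then replacing the minimum by the average over $z_d\in\Z_{b^{m-w_d}}^{\times}$ yields $\Theta_d\le\overline\theta_d^{\,1/\lambda}$, where $\overline\theta_d$ is that average of $\sum_{\setu\ni d}\gamma_\setu^\lambda\sum_{\bsh}\prod_{j\in\setu}|h_j|^{-\alpha\lambda}$. In either case, combining $e_{N,d-1}^2\le B_{d-1}^{1/\lambda}$ (the induction hypothesis) with the new-terms bound through the superadditivity $(x+y)^{1/\lambda}\ge x^{1/\lambda}+y^{1/\lambda}$, valid as $1/\lambda\ge1$, gives $e_{N,d}^2\le B_d^{1/\lambda}$; the induction closes once $\overline\theta_d\le B_d-B_{d-1}$ is established.

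That last estimate is the technical heart, and the step I expect to be the main obstacle. For $\setu\ni d$ put $\setv=\setu\setminus\{d\}$ and interchange the $z_d$-average with the $\bsh$-sum, so one must count the units $z_d\in\Z_{b^{m-w_d}}^{\times}$ solving $h_d\,b^{w_d}z_d\equiv-\sum_{j\in\setv}h_j\,b^{w_j}z_j\pmod{b^m}$. A $b$-adic valuation count — clean because $b$ is prime and $z_d$ is a unit — gives that the number of such $z_d$ equals $\varphi(b^{m-w_d})$ exactly when the right-hand side is $\equiv0$ and $b^{m-w_d}\mid h_d$, equals $b^{v_b(h_d)}$ (with $v_b$ the $b$-adic valuation) for $h_d$ in a thin valuation range when the right-hand side is $\not\equiv0$, and is $0$ otherwise. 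Splitting $\overline\theta_d$ accordingly into the ``$\equiv0$'' part — whose $\bsh_\setv$-factor is the sum $\sum_{\bsh_\setv\in\calD_\setv}\prod_{j\in\setv}|h_j|^{-\alpha\lambda}$ over the set $\calD_\setv=\{\bsh_\setv:\sum_{j\in\setv}h_j\,b^{w_j}z_j\equiv0\ (\mathrm{mod}\ b^m)\}$ of already-fixed coordinates — and the complement, bounding each $\bsh_\setv$-sum crudely by $(2\zeta(\alpha\lambda))^{|\setv|}$, and summing the geometric series in $h_d$ using $\alpha\lambda>1$, one arrives at $\overline\theta_d\le\tfrac2N\sum_{\setu\ni d}\gamma_\setu^\lambda(2\zeta(\alpha\lambda))^{|\setu|}b^{w_d}=B_d-B_{d-1}$. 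Pinning the constant down is the delicate point: a naive separate bound on the ``$\equiv0$'' and ``$\not\equiv0$'' parts loses a factor, so one combines them using $\tfrac1{\varphi(b^{m-w_d})}+\tfrac1{b^{m-w_d}}\le\tfrac2{b^{m-w_d}}$ (equality at $b=2$) together with the fact that the two $\bsh_\setv$-sums add up to $(2\zeta(\alpha\lambda))^{|\setv|}$; and one must check that the reduction index surviving the averaging is $w_d$ for each $\setu\ni d$, hence $\max_{j\in\setu}w_j$ globally, capped at $m$.
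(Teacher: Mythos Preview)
The paper does not prove this theorem at all; it simply records the statement and cites \cite{DKLP15}, so there is nothing to compare against beyond your own opening sentence. Your sketch is a faithful outline of the standard CBC argument as carried out in that reference: the dual-lattice expression, the inductive split $e_{N,d}^2=e_{N,d-1}^2+\Theta_d$, Jensen's inequality $(\sum a_k)^\lambda\le\sum a_k^\lambda$, the minimum-by-average replacement, the $b$-adic counting of solutions $z_d$, and the recombination via $x^{1/\lambda}+y^{1/\lambda}\le(x+y)^{1/\lambda}$ are exactly the ingredients used there.

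One small slip: in the case $w_d\ge m$ you write $\Theta_d\le(\sum_{\setu\ni d}\gamma_\setu^\lambda(2\zeta(\alpha\lambda))^{|\setu|})^{1/\lambda}=(B_d-B_{d-1})^{1/\lambda}$, but in fact $B_d-B_{d-1}=\tfrac{2}{N}\sum_{\setu\ni d}\gamma_\setu^\lambda(2\zeta(\alpha\lambda))^{|\setu|}\,b^{\min\{m,w_d\}}=2\sum_{\setu\ni d}\gamma_\setu^\lambda(2\zeta(\alpha\lambda))^{|\setu|}$, so the ``$=$'' should be ``$\le$''. This does not affect the conclusion.
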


This theorem can be extended to the weighted unanchored Sobolev space
$\calW_{s,\bsgamma}$ using randomly shifted lattice rules as follows.

\begin{theorem} \label{thm:QMC_sh_lat_red_CBC}
	For a prime power $N = b^m$, $m \in \NN_0$, and for $F \in \calW_{s,\bsgamma}$ belonging to the weighted unanchored Sobolev space defined over $[-\frac12,\frac12]^s$ with weights $\bsgamma = (\gamma_{\setu})_{\setu \subseteq \{1:s\}}$, a randomly shifted lattice rule can be constructed by the reduced CBC algorithm, see Algorithm~\ref{alg:RedCBCAlg}, such that for all $\lambda\in (1/2,1]$,
	\begin{multline*}
		\sqrt{\bbE_{\bsDelta}\left[ |I_s(F) - Q_N(F)|^2 \right]} \\
		\le
		\left(
		\sum_{\emptyset\ne\setu\subseteq\{1:s\}} \gamma_\setu^\lambda\,
		\varrho^{|\setu|}(\lambda) \, b^{\min\{m,\max_{j \in \setu} w_j\}}
		\right)^{1/(2\lambda)}
		\left( \frac2N \right)^{1/(2\lambda)} \, \|F\|_{\calW_{s,\bsgamma}},
	\end{multline*}
	with integer reduction indices $0 \le w_1 \le \cdots \le w_s$ and 
	$\varrho(\lambda) = 2\zeta(2\lambda) (2\pi^2)^{-\lambda}$.
\end{theorem}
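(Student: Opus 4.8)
The plan is to reduce the statement to the already-established Korobov bound of \RefThm{thm:redcbc_korobov}, exploiting the classical identification of the shift-averaged worst-case error in the unanchored weighted Sobolev space $\calW_{s,\bsgamma}$ with a Korobov worst-case error of smoothness $\alpha=2$ and rescaled weights.

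First I would invoke the probabilistic bound recalled above, $\sqrt{\bbE_\bsDelta[\,|I_s(F)-Q_N(F)|^2\,]}\le\widehat e_{N,s}(\bsz)\,\|F\|_{\calW_{s,\bsgamma}}$, so that it suffices to produce a generating vector $\widetilde\bsz$, constructible by \RefAlg{alg:RedCBCAlg}, whose shift-averaged worst-case error $\widehat e_{N,s}(\widetilde\bsz)$ obeys the claimed estimate. Starting from the reproducing kernel of $\calW_{s,\bsgamma}$ and averaging over the random shift $\bsDelta\in[0,1]^s$ --- the only point where a genuine computation is needed, via $\int_0^1 B_1(\{x+\Delta\})B_1(\{y+\Delta\})\,\rd\Delta=\tfrac12 B_2(\{x-y\})$, which lets the ``unanchored'' $B_1B_1$ piece of the kernel merge with the $\tfrac12 B_2$ piece into a single $B_2(\{x-y\})$ --- I would obtain the standard closed form
\[
  \widehat e_{N,s}^2(\bsz)
  =\sum_{\emptyset\ne\setu\subseteq\{1:s\}}\gamma_\setu\,\frac1N\sum_{k=0}^{N-1}\prod_{j\in\setu}B_2\!\left(\left\{\tfrac{kz_j}{N}\right\}\right),
\]
with $B_2(x)=x^2-x+\tfrac16$. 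Inserting the Fourier expansion $B_2(\{x\})=(2\pi^2)^{-1}\sum_{h\ne0}h^{-2}e^{2\pi\mathrm{i}hx}$ and using $\frac1N\sum_{k=0}^{N-1}e^{2\pi\mathrm{i}k\bsh\cdot\bsz/N}=\ind[\bsh\cdot\bsz\equiv0\ (\mathrm{mod}\ N)]$ gives
\[
  \widehat e_{N,s}^2(\bsz)
  =\sum_{\emptyset\ne\setu\subseteq\{1:s\}}\gamma_\setu\,(2\pi^2)^{-|\setu|}\!\!\sum_{\substack{\bsh_\setu\in(\Z\setminus\{0\})^{|\setu|}\\ \bsh_\setu\cdot\bsz_\setu\equiv0\ (N)}}\ \prod_{j\in\setu}\frac1{h_j^2}=e_{N,s}^2(\bsz),
\]
the right-hand side being precisely the squared worst-case error in the Korobov space $\calH(K_{s,2,\bsgamma'})$ of smoothness $\alpha=2$ with weights $\gamma'_\setu:=\gamma_\setu(2\pi^2)^{-|\setu|}$. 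The same identity with $s$ replaced by any $d\in\{1:s\}$ shows that running \RefAlg{alg:RedCBCAlg} for the Korobov worst-case error with $\alpha=2$ and weights $\bsgamma'$ is the very same procedure as running it for the shift-averaged Sobolev error with weights $\bsgamma$, so the output $\widetilde\bsz=(b^{w_1}z_1,\ldots,b^{w_s}z_s)$ is admissible in \RefThm{thm:redcbc_korobov}.

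Then I would apply \RefThm{thm:redcbc_korobov} with $\alpha=2$, weights $\bsgamma'$, and $d=s$: for every $\lambda\in(1/2,1]$,
\[
  \widehat e_{N,s}^2(\widetilde\bsz)=e_{N,s}^2(\widetilde\bsz)
  \le\left(\sum_{\emptyset\ne\setu\subseteq\{1:s\}}(\gamma'_\setu)^\lambda(2\zeta(2\lambda))^{|\setu|}b^{\min\{m,\max_{j\in\setu}w_j\}}\right)^{1/\lambda}\left(\frac2N\right)^{1/\lambda}.
\]
Substituting $\gamma'_\setu=\gamma_\setu(2\pi^2)^{-|\setu|}$ collapses the summand to $\gamma_\setu^\lambda\varrho(\lambda)^{|\setu|}b^{\min\{m,\max_{j\in\setu}w_j\}}$ with $\varrho(\lambda)=2\zeta(2\lambda)(2\pi^2)^{-\lambda}$; taking square roots and combining with the probabilistic bound gives exactly the asserted inequality.

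The routine parts are the Bernoulli/Fourier bookkeeping and the substitution of weights at the end. The step carrying real content --- which I would either spell out carefully or quote from the lattice-rule literature on the unanchored Sobolev space --- is the identity $\widehat e_{N,s}^2(\bsz)=e_{N,s}^2(\bsz)$ relating the shift-averaged Sobolev error to the Korobov worst-case error of smoothness $2$ with the rescaled weights $\bsgamma'$; this is also where one must check that the quantity minimized inside \RefAlg{alg:RedCBCAlg} really is the one governing the randomly shifted Sobolev integration error, which is what legitimizes the appeal to \RefThm{thm:redcbc_korobov}.
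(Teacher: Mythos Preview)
Your proposal is correct and follows exactly the paper's approach: invoke the probabilistic bound $\sqrt{\bbE_\bsDelta[\,|I_s(F)-Q_N(F)|^2\,]}\le\widehat e_{N,s}(\bsz)\,\|F\|_{\calW_{s,\bsgamma}}$, identify the shift-averaged Sobolev worst-case error with the Korobov worst-case error for $\alpha=2$ and weights $\gamma'_\setu=\gamma_\setu(2\pi^2)^{-|\setu|}$, and then apply \RefThm{thm:redcbc_korobov}. The only difference is that you spell out the Bernoulli/Fourier computation behind the kernel identity, whereas the paper simply cites it from the literature.
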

\begin{proof}
	Using Theorem~\ref{thm:redcbc_korobov} and the connection that the shift-averaged kernel of the Sobolev space equals the kernel of the Korobov space 
    $\calH(K_{s,\alpha,\widetilde{\bsgamma}})$ with $\alpha = 2$ and weights $\widetilde{\gamma}_\setu = \gamma_\setu / (2\pi^2)^{|\setu|}$, see, 
  	e.g., \cite{DKS13,N2014}, the result follows from
	\begin{equation*}
		\sqrt{\bbE_{\bsDelta}\left[ |I(F) - Q_N(F)|^2 \right]}
		\le
		\sqrt{\bbE_{\bsDelta}\left[ e^2_{N,s}(\bsz,\bsDelta) \, \|F\|_{\calW_{s,\bsgamma}}^2 \right]}
		=
		\widehat{e}_{N,s}(\bsz) \, \|F\|_{\calW_{s,\bsgamma}} . \qedhere
	\end{equation*} 
\end{proof}

It follows that we can construct the lattice rule in the weighted Korobov space using the connection mentioned in the proof of the previous theorem.

\subsection{Implementation of the reduced CBC algorithm}
\label{sec:fast-reduced-CBC-POD}

Similar to other variants of the CBC construction, we present a fast version of the reduced CBC method for POD weights in Algorithm~\ref{alg:RedCBCAlgPOD} for which Theorems~\ref{thm:redcbc_korobov} and~\ref{thm:QMC_sh_lat_red_CBC} still hold. The full derivation of Algorithm~\ref{alg:RedCBCAlgPOD} is given in Section~\ref{sec:appendix},
here we only introduce the necessary notation.
The squared worst-case error for POD weights $\bsgamma = (\gamma_{\setu})_{\setu \subseteq \{1:s\}}$ with 
$\gamma_{\setu} = \Gamma(|\setu|) \prod_{j \in \setu} \gamma_j$ and $\gamma_\emptyset = 1$ in the weighted Korobov space 
$\calH(K_{s,\alpha,\bsgamma})$ with $\alpha > 1$ can be written as
\begin{equation*}
	e^2_{N,s}(\bsz)
	=
	\frac1N \sum_{k=0}^{N-1} \sum_{\ell=1}^{s} \sum_{\substack{\setu \subseteq \{1:s\} \\ \abs{\setu}=\ell}} 
	\Gamma(\ell) \prod_{j \in \setu} \gamma_j \, \omega\!\left(\left\{\frac{k z_j}{N}\right\}\right) ,
\end{equation*}
where $\omega(x) = \sum_{0 \ne h \in \Z} \rme^{2\pi \rmi \, h x} / \abs{h}^{\alpha}$, see, e.g., \cite{DKS13,N2014}, and for $n \in \N$ we define $\Omega_n$ as
\begin{equation*}
	\Omega_{n}
	:=
	\left[ \omega\!\left( \frac{k z \bmod n}{n} \right) \right]_{\substack{z \in \ZZ_{n}^{\times} \\ k \in \ZZ_{n}}}
	\in \R^{\varphi(n) \times n}
	.
\end{equation*}
We assume that the values of the function $\omega$ can be computed at unit cost.
For integers $0 \le w' \le w'' \le m$ and given base~$b$ we define the ``fold and sum'' operator, which divides a length $b^{m-w'}$ 
vector into blocks of equal length $b^{m-w''}$ and sums them up, i.e.,
\begin{align}\label{eq:fold-and-sum}
  P_{w'',w'}^m
  :
  \R^{b^{m-w'}} \to \R^{b^{m-w''}}
  :
  P_{w'',w'}^m \, \bsv
  =
  \bigl[ \; \underbrace{I_{b^{m-w''}} | \cdots | I_{b^{m-w''}}}_{b^{w''-w'} \text{ times}} \; \bigr] \, \bsv
  ,
\end{align}
where $\cramped{I_{b^{m-w''}}}$ is the identity matrix of size $\cramped{b^{m-w''} \times b^{m-w''}}$.
The computational cost of applying $P_{w'',w'}^m$ is the length of the input vector $\cramped{\calO(b^{m-w'})}$.
It should be clear that $\cramped{P_{w''',w''}^m \, P_{w'',w'}^m \, \bsv = P_{w''',w'}^m \, \bsv}$ for $0 \le w' \le w'' \le w''' \le m$.
In step~\ref{step-update} of Algorithm \ref{alg:RedCBCAlgPOD} the notation $.*$ denotes the element-wise product of 
two vectors and $\Omega_{b^{m-w_j}}(z_j,:)$ means to take the row corresponding to $z=z_j$ from the matrix. Furthermore, 
Algorithm \ref{alg:RedCBCAlgPOD} includes an optional step in which the reduction indices are adjusted in case $w_1 > 0$, the 
auxiliary variable $w_0=0$ is introduced to satisfy the recurrence relation.

The standard fast CBC algorithm for POD weights has a complexity of $\calO(s \, N \log N + s^2 N)$, see, e.g., \cite{DKS13,N2014}.
The cost of our new algorithm can be substantially lower as is stated in the following theorem. We stress that the presented algorithm 
is the first realization of the reduced CBC construction for POD weights. Our new algorithm improves upon the one stated in \cite{DKLP15} 
which only considers product weights, but the same technique can be used there since POD weights are more general and include product weights.

\begin{theorem} \label{thm:complexity_redcbc}
	Given a sequence of integer reduction indices $0 \le w_1 \le w_2 \le \cdots$, the reduced CBC algorithm for a prime power $N = b^m$ points in $s$ dimensions 
	as specified in Algorithm~\ref{alg:RedCBCAlgPOD} can construct a lattice rule with near optimal worst-case error as in Theorem~\ref{thm:QMC_sh_lat_red_CBC} with an arithmetic cost of
	\begin{equation*}
	  \calO\!\left(\sum_{j=1}^{\min\{s,s^{\ast}\}} (m-w_j+j) \, b^{m-w_j} \right),
	\end{equation*}
	where $s^*$ is defined to be the largest integer such that $w_{s^*} < m$. The memory cost is $\calO(\sum_{j=1}^{\min\{s,s^{\ast}\}} b^{m-w_j})$.
	In case of product weights $\calO(\sum_{j=1}^{\min\{s,s^{\ast}\}} (m-w_j) \, b^{m-w_j})$ operations are required for the construction with 
	memory $\calO(b^{m-w_1})$.
\end{theorem}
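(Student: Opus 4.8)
The plan is to decouple the two assertions of the theorem --- near-optimality of the output rule and the cost bounds --- and to dispose of the first almost for free. Algorithm~\ref{alg:RedCBCAlgPOD} is only a fast reorganisation of the plain reduced CBC construction of Algorithm~\ref{alg:RedCBCAlg}: one first checks (this is exactly the content of Section~\ref{sec:appendix}) that, for POD weights $\gamma_\setu=\Gamma(|\setu|)\prod_{j\in\setu}\gamma_j$, the vector minimised over $z\in\ZZ_{b^{m-w_j}}^{\times}$ in the selection step of Algorithm~\ref{alg:RedCBCAlgPOD} has entries equal to $e_{N,j}^2(b^{w_1}z_1,\ldots,b^{w_{j-1}}z_{j-1},b^{w_j}z)$, so the returned $\widetilde{\bsz}$ is a generating vector producible by Algorithm~\ref{alg:RedCBCAlg}. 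The error bound ``as in Theorem~\ref{thm:QMC_sh_lat_red_CBC}'' is then inherited verbatim, and it remains only to count operations and storage.

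The selection step is executed only while $w_j<m$, i.e.\ for $j\le s^\ast$; the trailing assignments $z_j:=0$ for $s^\ast<j\le s$ are free, so the total cost is $\sum_{j=1}^{\min\{s,s^\ast\}}$ of the cost at step~$j$. At step~$j$ every working array has length $\calO(b^{m-w_j})$ --- the first variable being $b^{m-w_1}$-periodic in the node index, no length-$b^m$ array is ever formed --- and the per-step cost has four parts: (i) applying the fold-and-sum operators $P^m_{w_j,w_{j-1}}$ of \eqref{eq:fold-and-sum} to the $\calO(j)$ accumulators inherited from step $j-1$, at cost $\calO(j\,b^{m-w_{j-1}})$; (ii) one matrix--vector product with $\Omega_{b^{m-w_j}}$; (iii) the order-dependent update (step~\ref{step-update}), which maintains one accumulator per subset size $\ell\in\{1,\ldots,j\}$ and costs $\calO(j\,b^{m-w_j})$ in element-wise products and sums; and (iv) scanning for the $\argmin$ over $\ZZ_{b^{m-w_j}}^{\times}$, at cost $\calO(b^{m-w_j})$.

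The only non-routine ingredient is (ii), which I would handle by transplanting the Nuyens--Cools fast-CBC FFT trick from modulus $b^m$ to $n:=b^{m-w_j}$. Since $b$ is prime, reordering the columns of $\Omega_n$ indexed by units through successive powers of a generator of $\ZZ_n^{\times}$ makes that block circulant --- for $b=2$ and $m-w_j\ge3$, after the usual splitting $\ZZ_{2^{k}}^{\times}\cong\ZZ_2\times\ZZ_{2^{k-2}}$ into two cyclic pieces --- so its action costs one FFT of length $\varphi(n)$; the columns indexed by the non-units $b\,\ZZ_{n/b}$ satisfy $\omega(kz/n)=\omega((k/b)z/(n/b))$ and collapse onto the same problem at modulus $n/b$. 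Unrolling this recursion through the $m-w_j$ powers of $b$ dividing $n$, the FFT costs telescope to $\calO((m-w_j)\,b^{m-w_j})$. Combining (i)--(iv), summing over $j$, and using $\sum_{j\ge2} j\,b^{m-w_{j-1}}=\sum_{j\ge1}(j+1)\,b^{m-w_j}$ for part~(i), yields the arithmetic cost $\calO\!\big(\sum_{j=1}^{\min\{s,s^\ast\}}(m-w_j+j)\,b^{m-w_j}\big)$; the optional reindexing for $w_1>0$ is a mere relabelling and adds only lower-order terms.

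For memory, step~$j$ holds $\calO(j)$ accumulators together with the relevant row of $\Omega_{b^{m-w_j}}$ and an FFT workspace, all of length $\calO(b^{m-w_j})$, i.e.\ $\calO(j\,b^{m-w_j})$ at that step; if the maximum is attained at $j_0$, then $(w_j)$ being non-decreasing gives $j_0\,b^{m-w_{j_0}}\le\sum_{j=1}^{j_0}b^{m-w_j}\le\sum_{j=1}^{\min\{s,s^\ast\}}b^{m-w_j}$, the claimed bound. In the product-weight case there is a single accumulator: the factor $j$ disappears from both counts, leaving $\calO\!\big(\sum_{j=1}^{\min\{s,s^\ast\}}(m-w_j)\,b^{m-w_j}\big)$ operations and $\calO(\max_j b^{m-w_j})=\calO(b^{m-w_1})$ storage. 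The main obstacle is clearly ingredient~(ii): verifying that the FFT acceleration of the $\Omega$-multiplication really goes through at the reduced modulus $b^{m-w_j}$ --- and in particular handling the non-cyclic group $\ZZ_{2^k}^{\times}$ when $b=2$ --- since the classical fast-CBC analysis is set up for the full modulus $b^m$.
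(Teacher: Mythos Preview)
Your proposal is correct and follows essentially the same route as the paper. The only cosmetic difference is in the bookkeeping for the fold--and--sum cost: you leave the applications of $P^m_{w_j,w_{j-1}}$ in iteration~$j$ (at cost $\calO(j\,b^{m-w_{j-1}})$) and then absorb them via the index shift $\sum_{j\ge2} j\,b^{m-w_{j-1}}=\sum_{j\ge1}(j+1)\,b^{m-w_j}$, whereas the paper obtains the same effect by observing that the fold--and--sum can be executed already at the end of iteration~$j-1$, so that steps~\ref{step-qbar} and~\ref{step-update} in iteration~$j$ work directly on length-$b^{m-w_j}$ vectors; both arguments rely on the reduction to $w_1=0$ (your ``optional reindexing'', the paper's ``we may assume $w_0=w_1$'') to avoid an initial length-$b^m$ array. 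For the matrix--vector product with $\Omega_{b^{m-w_j}}$ you actually give more detail than the paper, which simply invokes the block-circulant FFT machinery of \cite{CKN06,CN06b} for the $\calO((m-w_j)\,b^{m-w_j})$ bound; your concern about $b=2$ and the non-cyclic group $\ZZ_{2^k}^\times$ is legitimate but is precisely what those references handle, so it is not a gap in the argument.
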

\begin{proof}
	We refer to Algorithm~\ref{alg:RedCBCAlgPOD}.
	Step~\ref{step-qbar} can be calculated in $\calO(j\,b^{m-w_{j-1}})$ operations (and we may assume $w_0 = w_1$ since the case $w_1 > 0$ 
	can be reduced to the case $w_1 = 0$). The matrix-vector multiplication in step~\ref{step-mv} can be done by exploiting the block-circulant structure to obtain a fast matrix-vector product by FFTs at a cost of $\calO((m-w_j) \, b^{m-w_j})$, see, e.g., \cite{CKN06,CN06b}.
	We ignore the possible saving by pre-computation of FFTs on the first columns of the blocks in the matrices $\Omega_{b^{m-w_j}}$ as this has cost $\calO((m-w_1) \, b^{m-w_1})$ and therefore is already included in the cost of step~\ref{step-mv}. Finally, the vectors $\bsq_{j,\ell}$ for $\ell=1,\ldots,j$ in step~\ref{step-update} can be calculated in $\calO(j \,b^{m-w_{j-1}})$. To obtain the total complexity we remark that the applications of the ``fold and sum'' operator, marked by the square brackets could be performed in iteration $j-1$ such that the cost of steps~\ref{step-qbar} and~\ref{step-update} in iteration $j$ are only $\calO(j \, b^{m-w_j})$ instead of $\calO(j \, b^{m-w_{j-1}})$. The cost of the 
	additional fold and sum to prepare for iteration $j$ in iteration $j-1$, which can be performed after step~\ref{step-update}, is then equal to the cost of step~\ref{step-update} in 
	that iteration. Since we can assume $w_0 = w_1$ we obtain the claimed construction cost. Note that the algorithm is written in such a way that the vectors $\bsq_{j,\ell-1}$ can be reused for storing the vectors $\bsq_{j,\ell}$ (which might be smaller). Similarly for the vectors $\overline{\bsq}_j$.
	Therefore the memory cost is $\calO(\sum_{j=1}^{\min\{s,s^{\ast}\}} b^{m-w_j})$. The result for product weights can be obtained similarly, see, e.g., \cite{N2014}.
\end{proof}

{\centering
\begin{minipage}{\linewidth}
	\begin{algorithm}[H]
		\small
		\caption{\small Fast reduced CBC construction for POD weights}	
		\label{alg:RedCBCAlgPOD}
  		\textbf{Input:} Prime power $N=b^m$ with $m \in \N_0$, integer reduction indices $0 \le w_1 \le \cdots \le w_s$, \\
   		and weights $\Gamma(\ell)$, $\ell \in \N_0$ with $\Gamma(0) = 1$, and $\gamma_j$, $j \in \N$ such that $\gamma_\setu = \Gamma(|\setu|) \prod_{j\in\setu} \gamma_j$.
		\\[2mm]
  		Optional: Adjust $m := \max\{0, m - w_1\}$ and for $j$ from $s$ down to $1$ adjust $w_j := w_j - w_1$.
  		\\[1mm]
  		Set $\bsq_{0,0}:=\mathbf{1}_{b^m}$ and $\bsq_{0,1}:=\mathbf{0}_{b^m}$, set $w_0:=0$. 
  		\\[1mm]
 		For $j$ from $1$ to $s$ and as long as $w_j < m$ do:
		\begin{enumerate}
			\item\label{step-qbar} Set $\overline{\bsq}_j := \sum_{\ell=1}^{j} \frac{\Gamma(\ell)}{\Gamma(\ell-1)} \left[ P_{w_j,w_{j-1}}^{m} \bsq_{j-1,\ell-1} \right] \in \R^{b^{m-w_j}}$ 
			(with $\bsq_{j-1,\ell-1} \in \R^{b^{m-w_{j-1}}}$).
			\item\label{step-mv} Calculate $\bsT_j := \Omega_{b^{m-w_j}} \, \overline{\bsq}_j \in \R^{\varphi(b^{m-w_j})}$ by exploiting the block-circulant structure 
			of the matrix $\Omega_{b^{m-w_j}}$ using FFTs.
			\item\label{step-argmin} Set $z_j := \argmin_{z \in \ZZ_{b^{m-w_j}}^{\times}} \bsT_j(z)$, with $\bsT_j(z)$ the component corresponding to $z$.
			\item\label{step-update} Set $\bsq_{j,0} := \mathbf{1}_{b^{m-w_j}}$ and $\bsq_{j,j+1} := \mathbf{0}_{b^{m-w_j}}$ and for $\ell$ from $j$ down to $1$ set
			\begin{equation*}
				\bsq_{j,\ell} := \left[ P_{w_j,w_{j-1}}^{m} \bsq_{j-1,\ell} \right] + \frac{\Gamma(\ell)}{\Gamma(\ell-1)} \gamma_j \, \Omega_{b^{m-w_j}}(z_j,:) \, {.*} \, \left[ P_{w_j,w_{j-1}}^{m} \bsq_{j-1,\ell-1} \right] \in \R^{b^{m-w_j}}.
			\end{equation*}
			\item\label{step-wce} Optional: Calculate squared worst-case error by $e^2_j := \frac1{b^m} \sum_{k \in \Z_{b^{m-w_j}}} \sum_{\ell=1}^j \bsq_{j,\ell}(k)$.
		\end{enumerate}
	Set all remaining $z_j := 0$ (for $j$ with $w_j \ge m$). \\[1.75mm]
	\textbf{Return:} Generating vector $\widetilde{\bsz}:=(b^{w_1} z_1,\ldots, b^{w_s} z_s)$ for $N=b^m$. \\
	(Note: the $w_j$'s and $m$ might have been adjusted to make $w_1=0$.)
		\end{algorithm}
\end{minipage}
}

\section{QMC finite element error analysis}

We now combine the results of the previous subsections to analyze the overall QMC finite element error. We consider 
the root mean square error (RMSE) given by
\begin{equation*}
	e_{N,s,h}^{\text{RMSE}}(G(u))
	:=
	\sqrt{\EE_{\bsDelta}\left[|\EE[G(u)] - Q_N(G(u_h^s))|^2\right]}
	.
\end{equation*}
The error $\EE[G(u)] - Q_N(G(u_h^s))$ can be written as
\begin{align*}
	\EE[G(u)] - Q_N(G(u_h^s))
	&=
	\EE[G(u)] - I_s(G(u_h^s)) + I_s(G(u_h^s)) - Q_N(G(u_h^s))
\end{align*}
such that due to the fact that $\EE_{\bsDelta}(Q_N(f)) = I_s(f)$ for any integrand $f$ we obtain
\begin{align*}
	\EE_{\bsDelta}\left[(\EE[G(u)] - Q_N(G(u_h^s)))^2\right]
	&=
	(\EE[G(u)] - I_s(G(u_h^s)))^2 + \EE_{\bsDelta}\left[(I_s - Q_N)^2(G(u_h^s))\right] \\
	&\phantom{= } + 
	2 (\EE[G(u)] - I_s(G(u_h^s))) \, \EE_{\bsDelta}\left[(I_s - Q_N)(G(u_h^s))\right] \\
	&=
	(\EE[G(u)] - I_s(G(u_h^s)))^2 + \EE_{\bsDelta}\left[(I_s - Q_N)^2(G(u_h^s))\right] .
\end{align*}
Then, noting that $\EE[G(u)] - I_s(G(u_h^s)) = \EE[G(u)] - I_s(G(u^s)) + I_s(G(u^s)) - I_s(G(u_h^s))$,
\begin{align*}
	(\EE[G(u)] - I_s(G(u_h^s)))^2
	&=
	(\EE[G(u)] - I_s(G(u^s)))^2 + (I_s(G(u^s)) - I_s(G(u_h^s)))^2 \\
	&\phantom{= } +
	2 (\EE[G(u)] - I_s(G(u^s))) (I_s(G(u^s)) - I_s(G(u_h^s)))
\end{align*}
and since for general $x,y \in \RR$ it holds that $2 x y \le x^2 + y^2$, we obtain
furthermore
\begin{align*}
	(\EE[G(u)] - I_s(G(u_h^s)))^2
	&\le
	2 (\EE[G(u)] - I_s(G(u^s)))^2 + 2 (I_s(G(u^s)) - I_s(G(u_h^s)))^2 .
\end{align*}
From the previous subsections we can then use \eqref{eq:truncation_IG_u} for the truncation part, 
\eqref{eq:IG_uh_bound}, which holds for general $\bsy \in U$ and thus also for $y_{\{1:s\}}$, for the finite element error,
and Theorem \ref{thm:QMC_sh_lat_red_CBC} for the QMC integration error to obtain the following error bound 
for the mean square error $\EE_{\bsDelta}[ |\EE[G(u)] - Q_N(G(u_h^s))|^2 ] =: e^{\text{MSE}}_{N,s,h}(G(u))$,
\begin{align} \label{eq:combined_error}
	e^{\text{MSE}}_{N,s,h}(G(u))
	&\le
	K_1 \|f\|^2_{V^*}\|G\|^2_{V^*} \left(\frac{1}{(1-\overline{\kappa}) \, a_{0,\min} - a_{0,\max} \, \kappa \sup_{j \ge s+1} b_j} \right)^2 \nonumber \\ 
	&\phantom{\le}\times \left(\frac{a_{0,\max}}{(1-\overline{\kappa}) \, a_{0,\min}} \kappa \sup_{j \ge s+1} b_j \right)^4
	+ K_2 \|f\|^2_{L^2}\|G\|^2_{L^2} \, h^4 \\
	&\phantom{\le}+ \left(
	\sum_{\emptyset\ne\setu\subseteq\{1:s\}} \gamma_\setu^\lambda\,
	\varrho^{|\setu|}(\lambda) \, b^{\min\{m,\max_{j \in \setu} w_j\}}
	\right)^{1/\lambda}
	\left( \frac2N \right)^{1/\lambda} \, \|G (u_h^s)\|_{\calW_{s,\bsgamma}}^2 \nonumber
\end{align}
for some constants $K_1, K_2 \in \R_{+}$ and provided that 
$\frac{a_{0,\max}}{(1-\overline{\kappa}) \, a_{0,\min}} \kappa \, \sup_{j \ge s+1} b_j < 1$.

\subsection{Derivative bounds of POD form}

In the following we assume that we have general bounds on the mixed partial derivatives 
$\partial^{\bsnu} u(\cdot,\bsy)$ which are of POD form; that is,
\begin{equation} \label{eq:form_pod_bounds}
	\|\partial^{\bsnu} u(\cdot,\bsy) \|_{V}
	\le
	C \, \widetilde{\bsb}^{\bsnu} \, \Gamma(|\bsnu|) \, \|f\|_{V^{\ast}}
\end{equation}
with a map $\Gamma: \N_0 \to \R$, a sequence of reals $\widetilde{\bsb} = (\widetilde{b}_j)_{j \ge 1} \in \R^{\N}$ and some constant $C \in \R_{+}$.
Such bounds can be found in the literature and we provided a new derivation in Theorem~\ref{thm:deriv_bound} also leading to POD weights.

For bounding the norm $\|G (u_h^s)\|_{\calW_{s,\bsgamma}}$, we can then use \eqref{eq:form_pod_bounds} and the definition in \eqref{eq:sob_norm} to proceed as outlined in \cite{KN16}, to obtain the estimate 
\begin{equation} \label{eq:bound_functional}
	\|G(u_h^s)\|_{\calW_{s,\bsgamma}}
	\le 
	C \, \|f\|_{V^*} \|G\|_{V^*}
	\Bigg(
	\sum_{\setu\subseteq\{1:s\}} \frac{\Gamma(|\setu|)^2 \prod_{j\in\setu} \widetilde{b}_j^2}{\gamma_\setu}
	\Bigg)^{1/2}\,.
\end{equation}
Denoting $\bsw:=(w_j)_{j\ge 1}$ and using \eqref{eq:bound_functional}, the contribution of the quadrature error 
to the mean square error $e_{N,h,s}^{\text{MSE}}(G(u))$ can be upper bounded by
\begin{equation} \label{eq:intermquaderror}
	\begin{aligned} 
		&\left(
		\sum_{\emptyset\ne\setu\subseteq\{1:s\}} \gamma_\setu^\lambda\,
		\varrho^{|\setu|}(\lambda) \, b^{\min\{m,\max_{j \in \setu} w_j\}}
		\right)^{1/\lambda}
		\left( \frac2N \right)^{1/\lambda} \, \|G (u_h^s)\|^2_{\calW_{s,\bsgamma}} \\
		&\qquad\le
		C \, \|f\|_{V^*} \|G\|_{V^*} \, C_{\bsgamma,\bsw,\lambda} \left( \frac2N \right)^{1/\lambda} ,
	\end{aligned}
\end{equation}
where we define
\begin{equation*}
	C_{\bsgamma,\bsw,\lambda}
	:=
	\left(
	\sum_{\emptyset\ne\setu\subseteq\{1:s\}} \gamma_\setu^\lambda\,
	\varrho^{|\setu|}(\lambda) \, b^{\min\{m,\max_{j \in \setu} w_j\}}
	\right)^{1/\lambda} 
	\left(
	\sum_{\setu\subseteq\{1:s\}} \frac{\Gamma(|\setu|)^2 \prod_{j\in\setu} \widetilde{b}_j^2}{\gamma_\setu}
	\right) .
\end{equation*}
The term $C_{\bsgamma,\bsw,\lambda}$ can be bounded as
\begin{equation*}
	C_{\bsgamma,\bsw,\lambda}\le 
	\left(
	\sum_{\setu\subseteq\{1:s\}} \gamma_\setu^\lambda\,
	\varrho^{|\setu|}(\lambda) \, b^{\sum_{j \in \setu} w_j - \sum_{\ell=1}^{|\setu|-1} w_{\ell}}
	\right)^{1/\lambda} 
	\left(
	\sum_{\setu\subseteq\{1:s\}} \frac{\Gamma(|\setu|)^2 \prod_{j\in\setu} \widetilde{b}_j^2}{\gamma_\setu}
	\right) .
\end{equation*}
Due to \cite[Lemma~6.2]{KSS12} the latter term is minimized by choosing the weights 
$\gamma_\setu$ as
\begin{equation} \label{eq:form_weights}
	\gamma_\setu
	:=
	\left(\frac{\Gamma(|\setu|)^2\, \prod_{j\in\setu} \widetilde{b}_j^2 \, \prod_{\ell=1}^{|\setu|-1} b^{w_{\ell}}}{\prod_{j\in\setu} \rho (\lambda)\, b^{w_j}}\right)^{1/(1+\lambda)}.
\end{equation}
Then we set
\begin{equation*}
	A_{\lambda} 
	:=
	\sum_{\setu\subseteq\{1:s\}} \gamma_\setu^\lambda\,
	\varrho^{|\setu|}(\lambda) \, b^{\sum_{j \in \setu} w_j - \sum_{\ell=1}^{|\setu|-1} w_{\ell}}
	=
	\sum_{\setu\subseteq\{1:s\}} \left[ \left(\frac{\Gamma(|\setu|)^{2\lambda}}{\prod_{\ell=1}^{|\setu|-1} b^{w_{\ell}}}\right)
	\left(\prod_{j \in \setu} \rho(\lambda) \, \widetilde{b}_j^{2\lambda} \, b^{w_j} \right)
	\right]^\frac{1}{1+\lambda}
\end{equation*}
and easily see that also
\begin{equation*}
	\sum_{\setu\subseteq\{1:s\}} \gamma_\setu^{-1} \left(\Gamma(|\setu|)^2 \prod_{j\in\setu} \widetilde{b}_j^2\right)
	= A_\lambda,
\end{equation*}
which implies that $C_{\bsgamma,\bsw,\lambda} \le A_\lambda^{1 + 1/\lambda}$.
We demonstrate how the term $A_{\lambda}$ can be estimated for the derivative bounds 
derived in Section \ref{subsec:PDE}. 	

In view of Theorem \ref{thm:deriv_bound}, assume in the following that 
\begin{equation}\label{eq:assumpThm1}
	\Gamma (\abs{\setu})=\kappa^{\abs{\setu}},
	\quad
	\widetilde{b}_j=\frac{2\,b_j}{1-\kappa},
	\quad
	\sum_{j=1}^\infty \left(b_j b^{w_j}\right)^{p}<\infty \quad \text{for} \quad p \in (0,1) .
\end{equation}

Note that we could also choose $\Gamma (\abs{\setu})=\kappa (\abs{\setu})^{\abs{\setu}}$ above, 
in which case the subsequent estimate of $A_\lambda$ can be done analogously, but
to make the argument less technical, we consider the slightly coarser variant 
$\Gamma (\abs{\setu})=\kappa^{\abs{\setu}}$ here.
In this case, 
\[
	A_\lambda
	=
	\sum_{\setu\subseteq\{1:s\}} \left[\kappa^{\abs{\setu}}\right]^{\frac{2\lambda}{1+\lambda}}
	\left(\prod_{\ell=1}^{|\setu|-1} b^{\frac{-w_{\ell}}{2\lambda}}\right)^\frac{2\lambda}{1+\lambda}
	\prod_{j\in\setu} \left(\left(\frac{2\,b_j}{1-\kappa}\right)^{2\lambda}\, b^{w_j}\, \rho (\lambda)\right)^{\frac{1}{1+\lambda}}.
\]
Note that, as $\lambda \le 1$, it holds that $b^{\frac{-w_{\ell}}{2\lambda}} \le b^{\frac{-w_{\ell}}{2}}$ and hence
\[
	A_\lambda
	\le
	\sum_{\setu\subseteq\{1:s\}} \left(\kappa^{\abs{\setu}}
	\prod_{\ell=1}^{|\setu|-1} b^{\frac{-w_{\ell}}{2}}\right)^\frac{2\lambda}{1+\lambda} 
	\prod_{j\in\setu} \left(\left(\frac{2\,b_j}{1-\kappa}\right)^{2\lambda}\, b^{w_j}\, \rho (\lambda)\right)^{\frac{1}{1+\lambda}}.
\]
We now proceed similarly to the proof of Theorem 6.4 in \cite{KSS12}. Let $(\alpha_j)_{j\ge 1}$ be a sequence of positive reals, 
to be specified below, which satisfies $\Sigma:=\sum_{j=1}^\infty \alpha_j<\infty$. 
Dividing and multiplying by $\prod_{j\in\fraku} \alpha_j^{(2\lambda)/(1+\lambda)}$, 
and applying H\"{o}lder's inequality with conjugate components $p=(1+\lambda)/(2\lambda)$ and $p^*=(1+\lambda)/(1-\lambda)$,
\begin{eqnarray*}
	A_\lambda
	&\le&
	\sum_{\setu\subseteq\{1:s\}} \left(\kappa^{\abs{\setu}}
	\prod_{\ell=1}^{|\setu|-1} b^{\frac{-w_{\ell}}{2}}\right)^\frac{2\lambda}{1+\lambda}
	\left(\prod_{j\in\fraku} \alpha_j^{\frac{2\lambda}{1+\lambda}}\right)
	\prod_{j\in\setu} \left(\left(\frac{2\,b_j}{1-\kappa}\right)^{2\lambda}\, b^{w_j}\, \rho (\lambda)/\alpha_j^{2\lambda}\right)^{\frac{1}{1+\lambda}}\\
	&\le&
	\left(\sum_{\setu\subseteq\{1:s\}} \kappa^{\abs{\setu}}
	\left(\prod_{\ell=1}^{|\setu|-1} b^{\frac{-w_{\ell}}{2}}\right) \prod_{j\in\fraku} \alpha_j\right)^\frac{2\lambda}{1+\lambda} \\
	&&\times
	\left(\sum_{\setu\subseteq\{1:s\}} \prod_{j\in\setu} 
	\left(\left(\frac{2\,b_j}{1-\kappa}\right)^{2\lambda}\, b^{w_j}\, \rho (\lambda)/\alpha_j^{2\lambda}\right)^{\frac{1}{1-\lambda}}
	\right)^{\frac{1-\lambda}{1+\lambda}} = B^{\frac{2\lambda}{1+\lambda}} \cdot \widetilde{B}^{\frac{1-\lambda}{1+\lambda}} ,
\end{eqnarray*}
where we define
\[
	B
	:=
	\sum_{\setu\subseteq\{1:s\}} \kappa^{\abs{\setu}}
	\left(\prod_{\ell=1}^{|\setu|-1} b^{\frac{-w_{\ell}}{2}}\right) \prod_{j\in\fraku} \alpha_j,
	\quad
	\widetilde{B}
	:=
	\sum_{\setu\subseteq\{1:s\}} \prod_{j\in\setu} 
	\left(\left(\frac{2\,b_j}{1-\kappa}\right)^{2\lambda}\, \frac{b^{w_j}\, \rho (\lambda)}{\alpha_j^{2\lambda}} \right)^{\frac{1}{1-\lambda}} .
\]
For the first factor we estimate
\begin{eqnarray*}
	B
	&\le&
	\sum_{\setu:\, \abs{\setu}<\infty} \kappa^{\abs{\setu}} \prod_{\ell=1}^{|\setu|-1} b^{\frac{-w_{\ell}}{2}} \prod_{j\in\setu} \alpha_j
	=
	\sum_{k=1}^\infty \left( \kappa^{k} \prod_{\ell=1}^{k-1} b^{\frac{-w_{\ell}}{2}}\right)
	\sum_{\substack{\setu:\, \abs{\setu}<\infty\\ \abs{\setu}=k}} \prod_{j\in\fraku} \alpha_j \\
	&\le&
	\sum_{k=1}^\infty \left( \kappa^{k} \prod_{\ell=1}^{k-1} b^{\frac{-w_{\ell}}{2}}\right)\frac{1}{k!}
	\sum_{\bsu \in\NN^k} \prod_{i=1}^k \alpha_{u_i}
	=
	\sum_{k=1}^\infty \left( \kappa^{k}	\prod_{\ell=1}^{k-1} b^{\frac{-w_{\ell}}{2}}\right)\frac{1}{k!} \Sigma^k.
\end{eqnarray*}
By the ratio test, the latter expression is finite if we choose $(\alpha_j)_{j\ge 1}$ such that 
$L:=\sup_{k\in\NN} \kappa\,b^{\frac{-w_k}{2}}(k+1)^{-1}=\kappa b^{\frac{-w_1}{2}}/2<1/\Sigma$.
Hence we assume that $(\alpha_j)_{j\ge 1}$ is chosen such that indeed $L<1/\Sigma$. 
Note that $L$ is small if $\kappa$ is small, which means that $\Sigma$ can be allowed to 
be large in this case. Consider now the term
\begin{eqnarray*}
	\widetilde{B} &\le& \sum_{\setu:\, \abs{\setu}<\infty} 
	\prod_{j\in\setu} 
	\left(\left(\frac{2\,b_j}{1-\kappa}\right)^{2\lambda}\, b^{w_j}\, \rho (\lambda)/\alpha_j^{2\lambda}\right)^{\frac{1}{1-\lambda}}\\
	&\le & \exp\left(\sum_{j=1}^\infty
	\left(\left(\frac{2\,b_j}{1-\kappa}\right)^{2\lambda}\, b^{w_j}\, \rho (\lambda)/\alpha_j^{2\lambda}\right)^{\frac{1}{1-\lambda}}\right)\\
	&\le& \exp \left(\sum_{j=1}^\infty \left(\frac{1}{1-\kappa}\right)^{\frac{2\lambda}{1-\lambda}}(\rho(\lambda))^{\frac{1}{1-\lambda}}4^\lambda 
	\left(b_j b^{w_j}\frac{1}{\alpha_j}\right)^{\frac{2\lambda}{1-\lambda}}\right)\\
	&=& \exp\left(\left(1-\kappa\right)^{\frac{-2\lambda}{1-\lambda}}(\rho(\lambda))^{\frac{1}{1-\lambda}}4^\lambda
	\sum_{j=1}^\infty \left(b_j b^{w_j}\alpha_j^{-1}\right)^{\frac{2\lambda}{1-\lambda}}\right).
\end{eqnarray*}
We require
\begin{equation}\label{eq:condAlambda}
	L<1/\Sigma=1/\sum_{j=1}^\infty \alpha_j\quad\mbox{and}
	\quad \sum_{j=1}^\infty \left(b_j b^{w_j}\alpha_j^{-1}\right)^{\frac{2\lambda}{1-\lambda}}<\infty.
\end{equation}
To this end, we choose
$\alpha_j:=\frac{\left(b_j b^{w_j}\right)^{p}}{\theta}$, where 
$\frac{\theta}{L}>\sum_{j=1}^\infty \left(b_j b^{w_j}\right)^{p}$.
Then, 
\begin{eqnarray}\label{eq:Alambdabound}
	A_\lambda &\le & \left(\sum_{k=1}^\infty \left( \kappa^{k}
	\prod_{\ell=1}^{k-1} b^{\frac{-w_{\ell}}{2}}\right)\frac{1}{k!}
	\Sigma^k\right)^\frac{2\lambda}{1+\lambda}\nonumber\\
	&&\times \exp\left(\frac{1-\lambda}{1+\lambda}
	\left(\frac{1}{1-\kappa}\right)^{\frac{2\lambda}{1-\lambda}}(\rho(\lambda))^{\frac{1}{1-\lambda}}4^\lambda
	\sum_{j=1}^\infty \left(b_j b^{w_j}\frac{1}{\alpha_j}\right)^{\frac{2\lambda}{1-\lambda}}\right)
\end{eqnarray}
as long as we choose $\lambda$ such that
\begin{equation}\label{eq:chooselambda}
\sum_{j=1}^\infty \left(b_j b^{w_j}\alpha_j^{-1}\right)^{2\lambda/(1-\lambda)}<\infty.
\end{equation}
We denote the upper bound in \eqref{eq:Alambdabound} by $\overline{A} (\lambda)$. Similarly to what 
is done in \cite[Proof of Theorem 6.4]{KSS12}, we see that Condition \eqref{eq:chooselambda} is satisfied if 
$\lambda\ge \frac{p}{2-p}$.
Again, similarly to \cite[Proof of Theorem 6.4]{KSS12} we see that the latter can be achieved by choosing 
\begin{equation}\label{eq:lambdap}
\lambda_p=\begin{cases}
1/(2-2\delta)\quad\mbox{for some $\delta\in (0,1/2)$} & \mbox{if $p\in (0,2/3]$},\\
p/(2-p) & \mbox{if $p\in (2/3,1)$}.
\end{cases}
\end{equation}
Hence by choosing $\lambda$ equal to $\lambda_p$, we get an efficient bound 
on $C_{\bsgamma,\bsw,\lambda_p} = A_{\lambda_p}^{1 + 1/\lambda_p}$,
as long as the $w_j$ are chosen to guarantee convergence of $\sum_{j=1}^\infty \left(b_j b^{w_j}\right)^{p}$.

\section{Combined error bound}

The derivation in the previous section leads to the following result.
\begin{theorem}	
	Given the PDE in \eqref{eq:PDE} for which we characterized the regularity of the random field by a sequence of 
	$b_j$ with sparsity $p \in (0,1)$ and determined a sequence of $w_j$ such that $\sum_{j=1}^\infty (b_j \, b^{w_j})^p<\infty$, we can construct the generating vector for an $N$-point randomized lattice rule using the reduced CBC algorithm (Algorithm \ref{alg:RedCBCAlgPOD}), at the cost of $\mathcal{O}(\sum_{j=1}^{\min\{s,s^{\ast}\}} (m-w_j+j) \, b^{m-w_j})$ operations, such that, assuming that \eqref{def:kappa}, \eqref{eq:cond_fem} and $\frac{\kappa \, a_{0,\max}}{(1-\overline{\kappa}) \, a_{0,\min}} \sup_{j \ge s+1} b_j < 1$ hold, we obtain an upper bound 
	\begin{equation}\label{eq:combined_error2}
		e^{\text{MSE}}_{N,s,h}(G(u)) \lesssim \left(\sup_{j \ge s+1} b_j \right)^2 + h^4 + \left( \frac2N \right)^{1/\lambda_p},
	\end{equation}
	where the implied constant is independent of $s$, $h$ and $N$.
\end{theorem}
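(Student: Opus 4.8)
The plan is to assemble the three error contributions that were already isolated in the mean-square bound \eqref{eq:combined_error}, and to verify that, after the weight choice \eqref{eq:form_weights} and the choice $\lambda=\lambda_p$ from \eqref{eq:lambdap}, each accompanying constant is finite and independent of $s$, $h$ and $N$; the cost statement then follows directly from Theorem~\ref{thm:complexity_redcbc}, since Algorithm~\ref{alg:RedCBCAlgPOD} returns the generating vector whose randomly shifted lattice rule satisfies the worst-case error estimate of Theorem~\ref{thm:QMC_sh_lat_red_CBC} used throughout the analysis.

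First I would handle the dimension truncation term, i.e.\ the first two lines of \eqref{eq:combined_error}. The standing hypothesis $\frac{\kappa\,a_{0,\max}}{(1-\overline\kappa)\,a_{0,\min}}\sup_{j\ge s+1}b_j<1$ together with $b_j\le1$ keeps the denominator $(1-\overline\kappa)\,a_{0,\min}-a_{0,\max}\,\kappa\sup_{j\ge s+1}b_j$ bounded away from $0$ uniformly in $s$ (it only increases towards $(1-\overline\kappa)\,a_{0,\min}$ as $s\to\infty$), so the prefactor is uniformly bounded; since moreover $\sup_{j\ge s+1}b_j\le1$, the fourth power is dominated by the square, and the whole term is $\lesssim(\sup_{j\ge s+1}b_j)^2$ with an implied constant depending only on $a_{0,\min},a_{0,\max},\overline\kappa,\kappa,\|f\|_{V^*},\|G\|_{V^*}$. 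The finite element term $K_2\|f\|_{L^2}^2\|G\|_{L^2}^2\,h^4$ is already of the form $\lesssim h^4$ via \eqref{eq:IG_uh_bound}, whose constants are independent of $h$ and $\bsy$, hence of $s$.

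Next I would treat the quadrature term by invoking the chain built in the previous section: by \eqref{eq:bound_functional} together with the identification $\Gamma(|\setu|)=\kappa^{|\setu|}$, $\widetilde b_j=2b_j/(1-\kappa)$ of \eqref{eq:assumpThm1} (coming from Theorem~\ref{thm:deriv_bound}), the contribution is bounded as in \eqref{eq:intermquaderror} by a constant multiple of $C_{\bsgamma,\bsw,\lambda}\,(2/N)^{1/\lambda}$, and with the weights chosen as in \eqref{eq:form_weights} one has $C_{\bsgamma,\bsw,\lambda}\le A_\lambda^{1+1/\lambda}$. I would then set $\lambda=\lambda_p$ and use $A_{\lambda_p}\le\overline A(\lambda_p)$ from \eqref{eq:Alambdabound}; it remains to check that $\overline A(\lambda_p)$ is finite and $s$-independent. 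The series factor in $\overline A(\lambda_p)$ converges by the ratio test once $(\alpha_j)$ is chosen with $L=\kappa\,b^{-w_1/2}/2<1/\Sigma$, and the exponential factor converges precisely because $\lambda_p$ is chosen so that \eqref{eq:chooselambda} holds, which by the calculation preceding \eqref{eq:lambdap} is guaranteed once $\lambda_p\ge p/(2-p)$ and $\sum_{j\ge1}(b_j b^{w_j})^p<\infty$, the latter being exactly the hypothesis imposed on the $w_j$. Since all resulting constants depend only on $b$, $\kappa$, $p$, the sequence $(b_j b^{w_j})_{j}$ and the fixed norms $\|f\|_{V^*},\|G\|_{V^*}$, the quadrature term is $\lesssim(2/N)^{1/\lambda_p}$ with an $s$-, $h$- and $N$-independent constant.

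Adding the three bounds yields \eqref{eq:combined_error2}, and the cost $\calO(\sum_{j=1}^{\min\{s,s^*\}}(m-w_j+j)\,b^{m-w_j})$ is read off from Theorem~\ref{thm:complexity_redcbc}. I expect the only genuinely delicate point to be the uniformity in $s$ of the constant multiplying $(2/N)^{1/\lambda_p}$ (essentially $\overline A(\lambda_p)^{1+1/\lambda_p}$): everything there rests on the summability assumption $\sum_j(b_j b^{w_j})^p<\infty$ and on the specific choice \eqref{eq:lambdap} of $\lambda_p$, so the remaining work is mostly a matter of carefully tracking that no constant silently acquires $s$-dependence through the sums over $\setu\subseteq\{1:s\}$ — the truncation and finite element pieces are routine once the first paragraph's observations are made.
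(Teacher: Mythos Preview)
Your proposal is correct and follows essentially the same route as the paper: the theorem is presented there as a direct consequence of the derivation in Section~3, and you have accurately summarized that derivation—splitting via \eqref{eq:combined_error}, handling the truncation and finite element terms by the observations you make, bounding the quadrature term through \eqref{eq:intermquaderror}, the weight choice \eqref{eq:form_weights}, the estimate $C_{\bsgamma,\bsw,\lambda}\le A_\lambda^{1+1/\lambda}$, the bound \eqref{eq:Alambdabound} on $A_\lambda$, and finally the choice $\lambda=\lambda_p$ from \eqref{eq:lambdap}; the cost claim is indeed just Theorem~\ref{thm:complexity_redcbc}. The only point worth tightening is your uniformity argument for the truncation prefactor: monotonicity in $s$ alone does not give a uniform lower bound on the denominator, but once the hypothesis holds for some $s_0$ it holds for all $s\ge s_0$ with denominator at least its value at $s_0$, which is the uniform bound you need.
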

Observe that if the $w_j$ increase sufficiently fast, the construction cost of Algorithm \ref{alg:RedCBCAlgPOD} does not depend anymore on the increasing dimensionality. Further note that the first term on the right-hand side of \eqref{eq:combined_error2} is small if $\sup_{j \ge s+1} b_j$ is small, and, since we assumed that $b_j$ must tend to zero by assumption \eqref{eq:assumpThm1}, we can shrink the first summand by choosing $s$ sufficiently large. By choosing $h$ sufficiently small, and $N$ sufficiently large, we can also make the other two summands in the overall error bound small. 

Note that $\sup_{j\ge s+1} b_j\le \sum_{j\ge s+1} b_j$, and that \eqref{eq:assumpThm1} 
yields $\sum_{j=1}^\infty b_j^p <\infty$, which implies that one can use the machinery developed in \cite{KSS12} 
to obtain a cost analysis similar to \cite[Theorem 8.1]{KSS12}. 
Note, in particular, that it is sufficient to choose $N$ of order 
$\mathcal{O}(\varepsilon^{-\lambda_p /2})$, independently of $s$, to meet an error threshold of $\varepsilon$. 

\section{Derivation of the fast reduced CBC algorithm}
\label{sec:appendix}

Finally in this last section the derivation of the fast reduced CBC algorithm for POD weights in Algorithm~\ref{alg:RedCBCAlgPOD} is given.
For prime $b$ and $m \in \N$ let $N = b^m$.
Consider a generating vector $\widetilde{\bsz} = (b^{w_1} z_1,\ldots, b^{w_d} z_d)$ with $z_j \in \Z_{b^{m-w_j}}^\times$ and integer $0 \le w_j \le m$ for each $j = 1,\ldots,d$.
Furthermore, for an integer $0 \le w' \le m$, the squared worst-case error can be written as
\begin{align*}
  e_{b^m,d}^2(\widetilde{\bsz})
  &=
  \frac{1}{b^m} \sum_{k \in \Z_{b^m}} \sum_{\ell=1}^{d} \sum_{\substack{\setu \subseteq \{1:d\} \\ |\setu| = \ell}}
  \Gamma(\ell)
  \prod_{j \in \setu} \gamma_j \, \omega\!\left(\frac{k \, b^{w_j} z_j \bmod b^m}{b^m}\right)
  \\
  &=
  \frac{1}{b^m} \sum_{k \in \Z_{b^m}} \sum_{\ell=1}^{d} \sum_{\substack{\setu \subseteq \{1:d\} \\ |\setu| = \ell}}
  \Gamma(\ell)
  \prod_{j \in \setu} \gamma_j \, \omega\!\left(\frac{k \, z_j \bmod b^{m-w_j}}{b^{m-w_j}}\right)
  \\
  &= 
  \frac{1}{b^m} \sum_{k' \in \Z_{b^{m-w'}}} \sum_{\ell=1}^{d}
  \underbrace{\sum_{t \in \Z_{b^{w'}}} \sum_{\substack{\setu \subseteq \{1:d\} \\ |\setu| = \ell}}
  \Gamma(\ell)
  \prod_{j \in \setu} \gamma_j \, \omega\!\left(\frac{(k' + t \, b^{m-w'}) \, z_j \bmod b^{m-w_j}}{b^{m-w_j}}\right)}_{=: q_{d,\ell,w'}(k') \text{ for } k' \in \Z_{b^{m-w'}}}
  \\
  &=
  \frac{1}{b^m} \sum_{k' \in \Z_{b^{m-w'}}} \sum_{\ell=1}^{d} q_{d,\ell,w'}(k')
  .
\end{align*}
We note that this holds for any integer $0 \le w' \le m$ and, in particular, for $w=0$ this is the vector being used in the normal fast CBC algorithm.
We now write the error in terms of the previous error, as is standard for CBC algorithms, by splitting the expression into subsets $\setu \subseteq \{1:d\}$ 
for which $d \not\in \setu$ and $d \in \setu$, to obtain
\begin{align*}
  &
  e_{b^m,d}^2(\widetilde{\bsz})
  =
  e^2_{b^m,d-1}(\widetilde{z}_1,\ldots,\widetilde{z}_{d-1}) +
  \\
  &\quad
  \frac{1}{b^m} \sum_{k \in \Z_{b^m}}
  \sum_{\ell=0}^{d-1} \frac{\Gamma(\ell+1)}{\Gamma(\ell)}
  \!\!\sum_{\substack{\setu \subseteq \{1:d-1\} \\ |\setu| = \ell}} \Gamma(\ell)
  \prod_{j \in \setu} \gamma_j \, \omega\!\left(\frac{k \, z_j \bmod b^{m-w_j}}{b^{m-w_j}}\right)
  \,
  \gamma_d \, \omega\!\left(\frac{k \, z_d \bmod b^{m-w_d}}{b^{m-w_d}}\right)
  .
\end{align*}
Since the choice of $z_d \in \Z_{b^{m-w_d}}^\times$ is modulo~$b^{m-w_d}$, we can make a judicious choice for splitting up $k = k' + t\,b^{m-w_d}$ 
for which the effect of dimension~$d$ (for a choice of $z_d$) is then constant for all $t \in \Z_{b^{w_d}}$. We obtain
\begin{align}\label{eq:e2-split}
  e_{b^m,d}^2(\widetilde{\bsz})
  &=
  e^2_{b^m,d-1}
  +
  \frac{1}{b^m} \sum_{k' \in \Z_{b^{m-w_d}}}
  \sum_{\ell=0}^{d-1} \frac{\Gamma(\ell+1)}{\Gamma(\ell)}
  q_{d-1,\ell,w_d}(k')
  \,
  \gamma_d \, \omega\!\left(\frac{k' \, z_d \bmod b^{m-w_d}}{b^{m-w_d}}\right)
  .
\end{align}
Then we observe that for all $0 \le w_{d-1} \le w_d \le m$, with $k' \in \Z_{b^{m-w_d}}$, writing $t = t' + t'' \, b^{w_d-w_{d-1}} \in \Z_{b^{w_d}}$ 
with $t' \in \Z_{b^{w_d-w_{d-1}}}$ and $t'' \in \Z_{b^{w_{d-1}}}$, leads to
\begin{align*}
  q_{d-1,\ell,w_d}(k')
  &=
  \sum_{t' \in \Z_{b^{w_d-w_{d-1}}}} \sum_{t'' \in \Z_{b^{w_{d-1}}}}
  \\
  &\qquad\quad
  \sum_{\substack{\setu \subseteq \{1:d-1\} \\ |\setu| = \ell}}
  \Gamma(\ell)
  \prod_{j \in \setu} \gamma_j \, \omega\!\left(\frac{(k' + (t' + t'' \, b^{w_d-w_{d-1}}) \, b^{m-w_d}) \, z_j \bmod b^{m-w_j}}{b^{m-w_j}}\right)
  \\
  &=
  \sum_{t' \in \Z_{b^{w_d-w_{d-1}}}} q_{d-1,\ell,w_{d-1}}(k' + t' \, b^{m-w_d})
  ,
\end{align*}
where $k'' = k' + t' \, b^{m-w_d} \in \Z_{b^{m-w_{d-1}}}$ as required for $q_{d-1,\ell,w_{d-1}}(k'')$.
Note that this is the property of the ``fold and sum'' operator as introduced in~\eqref{eq:fold-and-sum} and mentioned there.
Using matrix-vector notation, we rewrite the expression in \eqref{eq:e2-split} for all $z_d \in \Z_{b^{m-w_d}}^\times$ as
\begin{align*}
  \bse_{b^m,d}^2
  &=
  e_{b^m,d-1}^2
  +
  \frac{\gamma_d}{b^m} \, \Omega_{b^{m-w_d}}
	\left( \sum_{\ell=0}^{d-1} \frac{\Gamma(\ell+1)}{\Gamma(\ell)}
	\left[ P_{w_d,w_{d-1}}^m \, \bsq_{d-1,\ell,w_{d-1}} \right] \right) ,
\end{align*}
where $\bse_{b^m,d}^2 \in \R^{\varphi(b^{m-w_d})}$ is the vector with components $e_{b^m,d}^2(b^{w_1} z_1, \ldots, b^{w_d} z_d)$ for all $z_d \in \Z_{b^{m-w_d}}^\times$.
After $z_d$ has been selected we can calculate (for $\ell=1,\ldots,d$)
\begin{align*}
	\bsq_{d,\ell,w_d} 
	= 
	\left[ P_{w_d,w_{d-1}}^m \, \bsq_{d-1,\ell,w_{d-1}} \right]
	+
	\frac{\Gamma(\ell)}{\Gamma(\ell-1)} \gamma_d \, 
	\Omega_{b^{m-w_d}}(z_d,:) \, {.*} \, 
	\left[ P_{w_d,w_{d-1}}^m \, \bsq_{d-1,\ell-1,w_{d-1}} \right] .
\end{align*}
In Algorithm~\ref{alg:RedCBCAlgPOD} the vectors $\bsq_{j,\ell,w_j}$ are denoted by just $\bsq_{j,\ell}$.

%%%%%%%%%%%%%%%%%%%%%%%%%%%%%%%%%%%%%%%%%%%%%%%%%%%%%%%%%%%%%%%%%%%%%%%%%%%%%%%%%%%%%%%%%%%

\begin{small}
	\noindent\textbf{Authors' addresses:}\\
	
	\noindent Adrian Ebert\\
	Department of Computer Science\\
	KU Leuven\\
	Celestijnenlaan 200A, 3001 Leuven, Belgium.\\
	\texttt{adrian.ebert@cs.kuleuven.be}
	
	\medskip
	
	\noindent Peter Kritzer\\
	Johann Radon Institute for Computational and Applied Mathematics (RICAM)\\
	Austrian Academy of Sciences\\
	Altenbergerstr. 69, 4040 Linz, Austria.\\
	\texttt{peter.kritzer@oeaw.ac.at}
	
	\medskip
	
	\noindent Dirk Nuyens\\
	Department of Computer Science\\
	KU Leuven\\
	Celestijnenlaan 200A, 3001 Leuven, Belgium.\\
	\texttt{dirk.nuyens@cs.kuleuven.be}
	
\end{small}

\end{document}